\newtheorem{theorem}{Theorem}
\newtheorem{lemma}{Lemma}
\newtheorem{example}{Example}
\renewcommand{\phi}{\varphi}
\newcommand{\eps}{\varepsilon}
\newcommand{\lbd}{\lambda}
\DeclareMathOperator{\Id}{Id}
\DeclareMathOperator{\e}{e}
\newcommand{\prts}[1]{\left(#1\right)}
\newcommand{\prtsr}[1]{\left[#1\right]}
\newcommand{\abs}[1]{\left|#1\right|}
\newcommand{\set}[1]{\left\{#1\right\}}
\newcommand{\setm}[1]{\setminus\set{#1}}
\newcommand{\pfrac}[2]{\prts{\dfrac{#1}{#2}}}
\newcommand{\dsum}{\displaystyle\sum}
\newcommand{\dt}{\, dt}
   \newcommand{\N}{\ensuremath{\mathds N}}
   \newcommand{\R}{\ensuremath{\mathds R}}
\def\cA{\EuScript{A}}
\def\cB{\EuScript{B}}
\def\cF{\EuScript{F}}
\def\cV{\EuScript{V}}
\def\cX{\EuScript{X}}
\def\cP{\EuScript{P}}
\begin{document}
\title [Nonuniform $\mathbf{(\mu,\nu)}$-dichotomies and local dynamics ...]
   {Nonuniform $\mathbf{(\mu,\nu)}$-dichotomies and local dynamics of
   difference equations}
\author[Ant\'onio J. G. Bento]{Ant\'onio J. G. Bento}
\address{Ant\'onio J. G. Bento\\
   Departamento de Matem\'atica\\
   Universidade da Beira Interior\\
   6201-001 Covilh\~a\\
   Portugal}
\email{bento@ubi.pt}
\author{C\'esar M. Silva}
\address{C\'esar M. Silva\\
   Departamento de Matem\'atica\\
   Universidade da Beira Interior\\
   6201-001 Covilh\~a\\
   Portugal}
\email{csilva@ubi.pt}
\urladdr{www.mat.ubi.pt/~csilva}
\date{\today}
\subjclass[2010]{37D10, 34D09, 37D25}
\keywords{Invariant manifolds, nonautonomous difference equations,
   nonuniform generalized dichotomies}
\begin{abstract}
   We obtain a local stable manifold theorem for perturbations of nonautonomous
   linear difference equations possessing a very general type of nonuniform
   dichotomy, possibly with different growth rates in the uniform and
   nonuniform parts. We note that we consider situations were the classical
   Lyapunov exponents can be zero. Additionally, we study how the manifolds
   decay along the orbit of a point as well as the behavior under perturbations
   and give examples of nonautonomous linear difference equations that admit
   the dichotomies considered.
\end{abstract}
\maketitle
\section{Introduction}
   The main purpose of this paper is to discuss, in a Banach space $X$, the
   existence of stable manifolds for a general family of perturbations of
   nonautonomous linear difference equation
     $$x_{m+1}=A_m x_m + f_m(x_m), \quad m \in \N ,$$
   assuming that the perturbations $f_m \colon X \to X$ verify $f_m(0)=0$,
      $$\|f_m(u)- f_m(v)\| \le c \| u-v\|(\|u\| + \|v\|)^q, \quad m \in \N,$$
   for some constants $c > 0$ and $q>1$ and for each $u,v \in X$, and that the
   linear equation
     $$x_{m+1}=A_m x_m, \quad m \in \N,$$
   admits a very general type of nonuniform dichotomy given by arbitrary rates
   of growth.

The notion of uniform exponential dichotomy was introduced by Perron
in~\cite{Perron-MZ-1930} and constitutes a very important tool in the theory of
difference and differential equations, particularly in the study of invariant
manifolds. In spite of being used in a wide range of situations, sometimes this
notion is too demanding and it is of interest to consider more general kinds of
hyperbolic behavior. A much more general type of dichotomy, allowing the rates
of growth to vary along the trajectory of a point, is the notion of nonuniform
exponential dichotomy that was introduced by Barreira and Valls in the context
of nonautonomous differential equations in~\cite{Barreira-Valls-JDE-2006} and
that was inspired both in Perron's classical notion of exponential dichotomy
and in the notion of nonuniformly hyperbolic trajectory introduced by Pesin
in~\cite{Pesin-IANSSSR-1976,Pesin-UMN-1977,Pesin-IANSSSR-1977}. In the context
of difference equations, it was also introduced a notion of nonuniform
exponential dichotomy in~\cite{Barreira-Valls-DCDS-2006}.

The study of stable manifolds in the nonuniform context has a long history,
starting with a famous theorem on existence of stable manifolds for
nonuniformly hyperbolic trajectories, in the finite dimensional setting, proved
by Pesin~\cite{Pesin-IANSSSR-1976}. In~\cite{Ruelle-IHESPM-1979} Ruelle gave an
alternative proof of this theorem based on the study of perturbations of
products of matrices occurring in Oseledets' multiplicative ergodic
theorem~\cite{Oseledets-TMMS-1968} and, inspired in the classical work of
Hadamard, Pugh and Shub~\cite{Pugh-Shub-TAMS-1989} proved the same result using
graph transform techniques. In Hilbert spaces and under some compactness
assumptions, Ruelle~\cite{Ruelle-AM-1982} obtained a version of the stable
manifold theorem, following his approach in~\cite{Ruelle-IHESPM-1979}. Versions
of this theorem for transformations in Banach spaces, were established first by
Ma\~n\'e~\cite{Mane-LNM-1983} under some compactness and invertibility
assumptions and then by Thieullen~\cite{Thieullen-AIHPAN-1987} under weaker
hypothesis.

Stable manifold were also obtained for perturbations of nonautonomous linear
differential equations and for perturbations of nonautonomous linear difference
equations, assuming respectively that the linear differential equation and
linear difference equation admit a nonuniform exponential dichotomy. We refer
the reader to the book~\cite{Barreira-Valls-LNM-2008}, where the obtention of
stable manifolds for perturbations of linear differential equations admitting
the existence of nonuniform exponential dichotomies is discussed, and also
to~\cite{Barreira-Valls-DCDS-2006, Barreira-Silva-Valls-JDDE-2008,
Barreira-Silva-Valls-JLMS-2008} for a related discussion in the context of
difference equations.

Recently, invariant stable manifolds were obtained for perturbations of
nonautonomous linear difference and differential equations, assuming the
existence of nonuniform dichotomies that are not exponential. In particular, in
the discrete time setting, assuming the existence of a some type of polynomial
dichotomy for a nonautonomous linear difference equation, it was established
in~\cite{Bento-Silva-JFA-2009} the existence of local stable manifolds for a
certain class of perturbations and, for a more restricted class, there were
also obtained global stable manifolds.

Our result can be seen as a discrete counterpart of the results obtained
in~\cite{Bento-Silva-arXiv:1007.5039v1} for nonautonomous differential
equations and we emphasize that the stable manifold theorem for perturbations
of linear difference equations with nonuniform exponential dichotomies
in~\cite{Barreira-Valls-DCDS-2006} is included in our theorem as a very
particular case and our result also includes as particular cases stable
manifold theorems for polynomial dichotomies, as well as many other situations
where the classical Lyapunov exponent is zero. We stress that, to the best of
our knowledge, in the context of perturbations of nonautonomous linear
difference equations that admit a non-exponential nonuniform dichotomy, our
result is the first one addressing the existence of local stable manifolds for
the general class of perturbations above. In particular, it is new even for
nonuniform polynomial dichotomies (in~\cite{Bento-Silva-JFA-2009} it was
already considered the polynomial case but the type of nonuniform dichotomies
considered there were different from the ones considered here). In the context
of differential equations and under the existence of nonuniform polynomial
dichotomies it were also obtained local and global stable manifolds in
\cite{Bento-Silva-QJM}.

As mentioned, the type of dichotomies considered in this paper are very
general, allowing different rates of growth for the uniform and the nonuniform
parts and thus, to establish the existence of stable manifolds, we must assume
conditions relating the rate of decay of the some balls in the stable spaces
and the growth rates.

To highlight the generality of this concept of dichotomy, we discuss some
families of new examples that verify the hypothesis in our main result.
Additionally, we obtain a upper bound for the decay of solutions along the
stable manifolds and we study how the stable manifolds vary with the
perturbations by giving bounds, in some suitable metric, on the distances
between the functions whose graphs are the stable manifolds.

The content of the paper is the following: in Section~\ref{Section:Main-Result}
we introduce some notation, the main definitions and state the main theorem;
next, in Section~\ref{Section:Examples} we present some examples; then, in
Section~\ref{Section:Proof}, we prove the main theorem; finally, in
Section~\ref{section:perturbations} we study how the manifolds obtained vary
with the perturbations considered.
\section{Main Result} \label{Section:Main-Result}
We say that an increasing sequence $\mu=\prts{\mu_n}_{n \in \N_0}$ is a
\textit{growth rate} if $\mu_0 \ge 1$ and
   $\lim\limits_{n \to + \infty} \mu_n = +\infty.$

Let $\mu = \prts{\mu_n}_{n \in \N_0}$ and $\nu = \prts{\nu_n}_{n \in \N_0}$ be
growth rates and let $B(X)$ be the space of bounded linear operators in a
Banach space~$X$. Given a sequence $(A_n)_{n\in\N}$ of invertible operators of
$B(X)$ and putting
   $$ \cA_{m,n} =
      \begin{cases}
         A_{m-1} \cdots A_n & \text{ if } m>n, \\
         \text{Id} & \text{ if } m=n,
      \end{cases}$$
we say that the linear difference equation
\begin{equation} \label{eq:lin:dif}
   x_{m+1} = A_m x_m, \ m \in \N
\end{equation}
admits a \textit{nonuniform $(\mu,\nu)$-dichotomy} if there exist projections
$P_m$, $m\in\N$, such that
   $$ P_m \cA_{m,n} = \cA_{m,n} P_n,\quad m, n\in\N,$$
and constants $a < 0\le b$, $\eps \ge 0$ and $D\ge 1$ such that for every $n
\in \N$ and every $m \ge n$,
\begin{align}
   & \|\cA_{m,n} P_n\|
      \le D \pfrac{\mu_m}{\mu_{n-1}}^a \nu_{n-1}^\eps, \label{eq:dich-1}\\
   & \|\cA_{m,n}^{-1}Q_m\|
      \le D \pfrac{\mu_{m-1}}{\mu_n}^{-b} \nu_{m-1}^\eps, \label{eq:dich-2}
\end{align}
where $Q_m=\Id-P_m$ is the complementary projection. When $\eps = 0$ we say
that we have a \textit{uniform $\mu$-dichotomy} or simply a
\textit{$\mu$-dichotomy}.

In these conditions we define, for each $n \in \N$, the linear subspaces
$E_n=P_n (X)$ and $F_n=Q_n(X)$. As usual, we identify the vector spaces $E_n
\times F_n$ and $E_n \oplus F_n$ as the same vector space.

We are going to address the problem of existence of stable manifolds of the
difference equation
   $$ x_{m+1} = A_m x_m + f_m(x_m), \ m \in \N,$$
where $f_m : X \to X$ are perturbations for which there are constants $c>0$ and
$q>1$ such that
\begin{align}
   & f_m(0)=0, \label{cond-f-0}\\
   & \|f_m(u)- f_m(v)\| \le c \| u-v\|(\|u\| + \|v\|)^q \label{cond-f-1}
\end{align}
for every $m \in \N$ and every $u, v \in X$. Note that making $v = 0$ in
\eqref{cond-f-1} we have
\begin{equation} \label{cond-f-1a}
   \|f_m(u)\| \le c \|u\|^{q+1}
\end{equation}
for every $m \in \N$ and every $u \in X$.

Given $n\in \N$ and $v_n=(\xi,\eta)\in E_n \times F_n$, for each $m>n$ we
write
\begin{equation}\label{eq:traj}
   v_m
   =\cF_{m,n}(v_n)
   = \cF_{m,n}(\xi,\eta)
   =(x_m,y_m) \in E_m \times F_m,
\end{equation}
with
\begin{equation} \label{eq:dyn}
   \cF_{m,n} =
      \begin{cases}
         (A_{m-1}+f_{m-1})\circ \cdots\circ (A_n+f_n) & \text{ if } m>n, \\
         \text{Id} & \text{ if } m=n.
      \end{cases}
\end{equation}

We denote by $B_n(r)$ the open ball of $E_n$ centered at zero and with radius
$r>0$. Fix now $\delta>0$ and let $\beta=\prts{\beta_n}_{n \in \N}$ be a
positive sequence. We denote by $\cX_{\delta,\beta}$ the space of sequences
$(\phi_n)_{n \in \N}$ of continuous functions $\phi_n\colon B_n(\delta
\beta_n)\to F_n$ such that
\begin{align}
   & \phi_n(0)=0 \label{cond-phi-0} \\
   & \| \phi_n(\xi) - \phi_n(\bar\xi) \| \le \| \xi - \bar\xi\|
   \label{cond-phi-1}
\end{align}
for every $\xi$, $\bar{\xi} \in B_n(\delta \beta_n)$ and every $n \in \N$.
Given $(\phi_n)_{n \in \N} \in \cX_{\delta,\beta}$, for each $n \in \N$, we
consider the graph
\begin{equation}\label{def:V_phi,n,delta,beta}
   \cV_{\phi,n,\delta,\beta}
   = \set{(\xi,\phi_n(\xi)): \xi \in B_n(\delta \beta_n)},
\end{equation}
that we call \textit{local stable manifold}.

We now state the result on the existence of local stable manifolds and its
proof will be given in Section~\ref{Section:Proof}.

\begin{theorem} \label{thm:local}
   Given a Banach space $X$, let $f_m : X \to X$ be a sequence of functions
   satisfying \eqref{cond-f-0} and \eqref{cond-f-1} for some $c > 0$ and $q >
   1$. Suppose equation~\eqref{eq:lin:dif} admits a nonuniform
   $(\mu,\nu)$-dichotomy for some growth rates $\mu$ and $\nu$, $D \ge 1$, $a <
   0 \le b$ and $\eps \ge 0$. Assume that
   \begin{equation} \label{eq:CondicaoTeo}
      \lim_{m \to +\infty} \mu_m^a \mu_{m-1}^{-b} \nu_{m-1}^\eps = 0
   \end{equation}
   and that
   \begin{equation} \label{int-conv}
      \dsum_{k=1}^{+\infty} \mu_k^{aq} \nu_k^\eps \text{ is convergent}.
   \end{equation}
   Define the sequences $\beta=\prts{\beta_m}_{m \in \N}$ and
   $\tilde\beta=\prts{\tilde\beta_m}_{m \in \N}$ by
   \begin{equation}\label{def:beta}
      \beta_m
      = \dfrac{\mu_{m-1}^a}{\nu_{m-1}^{\eps(1+1/q)}
         \prts{\dsum_{k=m}^{+\infty} \mu_k^{aq} \nu_k^\eps}^{1/q}}
      \ \ \ \text{ and }  \ \ \
      \tilde{\beta}_m = \beta_m \nu_{m-1}^{-\eps}
   \end{equation}
   and suppose that there is a constant $K \ge 1$ such that
   \begin{equation}\label{eq:decreasing}
      \dfrac{\mu_m^a \beta_m^{-1}}{\mu_{n-1}^a \beta_n^{-1}} \le K
      \text{ for every } n \in \N \text{ and every } m \ge n.
   \end{equation}
   Then, for every $C > D$, choosing $\delta > 0$ sufficiently small, there is
   a unique $\phi \in \cX_{\delta,\beta}$ such that
   \begin{equation} \label{thm:local:invar}
      \cF_{m,n}(\cV_{\phi,n,\delta/(CK),\tilde{\beta}})
      \subseteq \cV_{\phi,m,\delta,\beta} \
      \text{for every } n \in \N \text{ and every } m \ge n.
   \end{equation}
   where $\cV_{\phi,n,\delta/(CK),\tilde{\beta}}$ and
   $\cV_{\phi,m,\delta,\beta}$ are given by~\eqref{def:V_phi,n,delta,beta}.
   Furthermore, given $n \in \N$, we have
   \begin{equation}\label{thm:ine:norm:F_mn(xi...)-F_mn(barxi...)}
      \|\cF_{m,n}(\xi,\phi_n(\xi)) -\cF_{m,n}(\bar \xi,\phi_n(\bar \xi)) \|
      \le 2 C \pfrac{\mu_m}{\mu_{n-1}}^a \nu_{n-1}^\eps \, \|\xi-\bar \xi\|.
   \end{equation}
   for every $m \ge n$ and $\xi, \bar\xi \in
   B_n(\delta\tilde\beta_n/(CK))$.
\end{theorem}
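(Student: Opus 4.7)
My plan is a Lyapunov--Perron fixed-point argument in discrete time. Fix $n\in\N$ and $\xi\in E_n$ of small norm, and seek an orbit $z_m=x_m+y_m\in E_m\oplus F_m$ of the perturbed equation with $x_n=\xi$ and controlled forward decay. Projecting the recursion $z_{m+1}=A_m z_m+f_m(z_m)$ onto $E_m$ and $F_m$ and using that the projections commute with $\cA_{m,n}$, such a trajectory must satisfy the coupled summation system
\begin{equation*}
x_m=\cA_{m,n}\xi+\dsum_{k=n}^{m-1}\cA_{m,k+1}P_{k+1}f_k(z_k),\qquad
y_m=-\dsum_{k=m}^{+\infty}\cA_{k+1,m}^{-1}Q_{k+1}f_k(z_k);
\end{equation*}
the forward sum is controlled by \eqref{eq:dich-1}, while the backward sum converges thanks to \eqref{eq:dich-2} combined with the nonlinear bound \eqref{cond-f-1a} and the convergence hypothesis \eqref{int-conv}. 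One then sets $\phi_n(\xi):=y_n$ on this trajectory.

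I would work in the complete metric space $\cB$ of sequences of continuous maps $z_m\colon B_n(\delta\tilde\beta_n/(CK))\to X$, $m\ge n$, with $z_m(0)=0$ and satisfying
$$\|z_m(\xi)-z_m(\bar\xi)\|\le 2C\pfrac{\mu_m}{\mu_{n-1}}^a\nu_{n-1}^\eps\|\xi-\bar\xi\|,$$
which is exactly the target estimate \eqref{thm:ine:norm:F_mn(xi...)-F_mn(barxi...)}; the metric is the associated weighted Lipschitz norm. The weight is dictated by \eqref{eq:dich-1}, while $\beta_m$ and $\tilde\beta_m$ in \eqref{def:beta} are calibrated precisely so that, using \eqref{cond-f-1}--\eqref{cond-f-1a} and the dichotomy estimates, the operator $T$ sending $(z_m)$ to the right-hand sides of the summation system is well-defined, sends $\cB$ into itself, and is a contraction, provided $C>D$ is fixed and $\delta>0$ is sufficiently small. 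Hypothesis \eqref{eq:CondicaoTeo} is used to control the $F_m$-component relative to the $E_m$-component, ensuring in particular that $\phi_n(\xi)$ is of the correct (small) order. The unique fixed point $z^*$ yields $\phi_n(\xi):=y_n^*(\xi)$, with $\phi_n(0)=0$ and the unit Lipschitz bound \eqref{cond-phi-1} being immediate from the $m=n$ case of the weighted estimate.

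I expect the main obstacle to be the invariance property \eqref{thm:local:invar}. For $\xi\in B_n(\delta\tilde\beta_n/(CK))$ two things must be verified: (i) the $E_m$-projection of $\cF_{m,n}(\xi,\phi_n(\xi))$ lies in $B_m(\delta\beta_m)$; (ii) its $F_m$-component coincides with $\phi_m$ applied to that projection. Part (ii) is a uniqueness argument: the shift of the fixed-point trajectory to the new starting time $m$ satisfies the analogous summation system at $m$, hence by uniqueness of the fixed point for the operator based at time $m$ it must be the one defining $\phi_m$. Part (i) is where hypothesis \eqref{eq:decreasing} becomes crucial: applying the built-in Lipschitz bound with $\bar\xi=0$ and using $\tilde\beta_n=\beta_n\nu_{n-1}^{-\eps}$ yields
$$\|x_m\|\le 2C\pfrac{\mu_m}{\mu_{n-1}}^a\nu_{n-1}^\eps\cdot\dfrac{\delta\tilde\beta_n}{CK}=\dfrac{2\delta}{K}\cdot\dfrac{\mu_m^a\beta_m^{-1}}{\mu_{n-1}^a\beta_n^{-1}}\,\beta_m,$$
and \eqref{eq:decreasing} bounds the middle factor by $K$, giving $\|x_m\|\le 2\delta\beta_m$; absorbing the constant $2$ by halving $\delta$ then yields the desired containment in $B_m(\delta\beta_m)$, completing the invariance.
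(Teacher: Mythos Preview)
Your Lyapunov--Perron strategy is a legitimate variant, but there are two genuine gaps.

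The main one is a domain mismatch. You run the fixed point with initial data in $B_n(\delta\tilde\beta_n/(CK))$, so your $\phi_n$ is only defined there; but the theorem asks for $\phi\in\cX_{\delta,\beta}$, meaning each $\phi_n$ must be defined on the strictly larger ball $B_n(\delta\beta_n)$, and the target of the invariance \eqref{thm:local:invar} is $\cV_{\phi,m,\delta,\beta}$, which uses $\phi_m$ on $B_m(\delta\beta_m)$. Your part~(ii) argument then breaks: you show only $x_m\in B_m(\delta\beta_m)$, while your $\phi_m$ lives on the smaller $B_m(\delta\tilde\beta_m/(CK))$, so $\phi_m(x_m)$ need not even be defined and the shift-and-uniqueness comparison cannot be carried out. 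The paper resolves this with a two-level scheme: for each candidate $\phi$ (extended to all of $E_n$ via the auxiliary space $\cX^*_{\delta,\beta}$) it first finds the stable trajectory $x^\phi$ on the full ball $B_n(\delta\beta_n)$ (Lemma~\ref{lemma:Exist-Suc-x_m}), then runs a second contraction on $\phi$ itself (Lemma~\ref{lemma:Exist-Suc-phi}), and only afterwards restricts to the small ball to verify invariance (Lemma~\ref{lemma:equiv}). If you insist on a single fixed point for $z=(x,y)$, you must run it on $B_n(\delta\beta_n)$; the estimates still close thanks to the identity $\mu_{n-1}^{-aq}\nu_{n-1}^{\eps(q+1)}\beta_n^q\sum_{k\ge n}\mu_k^{aq}\nu_k^\eps=1$ built into \eqref{def:beta}.

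Second, the unit Lipschitz bound \eqref{cond-phi-1} is \emph{not} immediate from the $m=n$ case of your weighted estimate: that case only gives $\|z_n(\xi)-z_n(\bar\xi)\|\le 2C(\mu_n/\mu_{n-1})^a\nu_{n-1}^\eps\|\xi-\bar\xi\|$, and the constant on the right is in general not bounded by $1$ (for $\eps>0$ it even blows up with $n$). One must estimate the tail sum defining $y_n=\phi_n(\xi)$ directly, again using the $\beta_n$-identity; this is a genuine computation, carried out in the paper when showing $\Phi(\cX^*_{\delta,\beta})\subset\cX^*_{\delta,\beta}$. Relatedly, your ``absorb the factor~$2$ by halving~$\delta$'' is circular: halving $\delta$ reruns the entire construction and the same factor~$2$ reappears at the new scale. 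The paper sidesteps this by carrying only the $E$-component in its trajectory space $\cB_{n,\delta,\beta}$ with weight $C$ rather than $2C$, so that \eqref{eq:decreasing} yields $\|x_m\|\le\delta\beta_m$ on the nose.
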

\section{Examples} \label{Section:Examples}
In this section we will illustrate our main result with some examples. Firstly,
we will give examples of linear difference equations admitting nonuniform
$(\mu, \nu)$-dichotomies for any growth rates $\mu$ and $\nu$. Secondly, we
show that the nonuniform exponential result obtained
in~\cite{Barreira-Valls-DCDS-2006} is a particular case of our theorem and
finally we highlight two new settings were our main theorem can be applied.

\begin{example}
   Given $a < 0 \le b$ and $\eps \ge 0$, let $\prts{A_n}_{n \in \N}$ be the
   sequence of bounded linear operators $A_n : \R^2 \to \R^2$ given by the
   diagonal matrices
      $$ A_n = \prtsr{
         \begin{array}{cc}
            \pfrac{\mu_{n+1}}{\mu_{n-1}}^a \, \
               \pfrac{\nu_n^{\cos(n\pi)-1}}
               {\nu_{n-1}^{\cos((n-1)\pi)-1}}^{\eps/2}
            & 0 \\
            0 &
               \pfrac{\mu_{n+1}}{\mu_n}^b \, \
               \pfrac{\nu_n^{\cos(n\pi)-1}}
               {\nu_{n-1}^{\cos((n-1)\pi)-1}}^{\eps/2}
         \end{array}}$$
   where $\displaystyle\mu = \prts{\mu_n}_{n \in \N_0}$ and $\displaystyle\nu =
   \prts{\nu_n}_{n \in \N_0}$ are two growth rates. Then
      $$ \cA_{m,n} = \prtsr{
         \begin{array}{cc}
            \pfrac{\mu_{m-1} \mu_m}{\mu_{n-1}\mu_n}^a
            \pfrac{\nu_{m-1}^{\cos((m-1)\pi)-1}}
               {\nu_{n-1}^{\cos((n-1)\pi)-1}}^{\eps/2}
            & 0 \\
            0 &
               \pfrac{\mu_m}{\mu_n}^b \, \
               \pfrac{\nu_{m-1}^{\cos((m-1)\pi)-1}}
               {\nu_{n-1}^{\cos((n-1)\pi)-1}}^{\eps/2}
         \end{array}}$$
   and considering the projections given by $P_n(x,y) = (x,0)$ and $Q_n(x,y) =
   (0,y)$ we have
      $$ \|\cA_{m,n}P_n\|
         = \pfrac{\mu_{m-1}}{\mu_n}^a
            \pfrac{\mu_m}{\mu_{n-1}}^a \,
            \pfrac{\nu_{m-1}^{\cos((m-1)\pi)-1}}
            {\nu_{n-1}^{\cos((n-1)\pi)-1}}^{\eps/2}$$
   and
      $$ \|\cA_{m,n}^{-1} Q_m\|
         = \pfrac{\mu_{m}}{\mu_{m-1}}^{-b}
            \pfrac{\mu_{m-1}}{\mu_n}^{-b} \,
            \pfrac{\nu_{m-1}^{\cos((m-1)\pi)-1}}
            {\nu_{n-1}^{\cos((n-1)\pi)-1}}^{-\eps/2}$$
   and this implies
      $$ \|\cA_{m,n}P_n\|
         \le \pfrac{\mu_m}{\mu_{n-1}}^a \, \nu_{n-1}^\eps
         \ \ \ \text{ and } \ \ \
         \|\cA_{m,n}^{-1} Q_m\|
         \le \pfrac{\mu_{m-1}}{\mu_n}^{-b} \, \nu_{m-1}^\eps.$$
   This example shows that for every growth rates $\mu$ and $\nu$ we have a
   nonuniform $(\mu,\nu)$-dichotomy.

   Moreover, if $m$ is even, $n$ is odd and $\mu_m/\mu_{m-1}$ is bounded by a
   constant $\lbd$ then
      $$ \lbd^{-b} \pfrac{\mu_{m-1}}{\mu_n}^{-b} \, \nu_{m-1}^\eps
         \le \|\cA_{m,n}^{-1} Q_m\|
         \le \pfrac{\mu_{m-1}}{\mu_n}^{-b} \, \nu_{m-1}^\eps$$
   and this shows that the nonuniform part of the dichotomy can not be
   removed.
\end{example}

\begin{example}
   With $\mu_n = \nu_n = \e^n$ we get the local stable manifold theorem
   obtained by Barreira and Valls in \cite{Barreira-Valls-DCDS-2006}. Here,
   condition~\eqref{eq:CondicaoTeo} becomes $a + \eps < b$,
   condition~\eqref{int-conv} becomes $aq+\eps < 0$ and since
      $$ \beta_m
         = \e^{-a+\eps(1+1/q)} \prts{1-\e^{aq+\eps}}^{1/q}
            \e^{-\eps(1+2/q)m}$$
   and
      $$ \dfrac{\mu_m^a \beta_m^{-1}}{\mu_{n-1}^a \beta_n^{-1}}
         = \e^a \e^{\prts{a + \eps(1+2/q)}(m-n)},$$
         condition~\eqref{eq:decreasing}
   becomes $a + \eps(1 + 2/q) \le 0$, that is the condition $a + \beta \le 0$
   in~\cite{Barreira-Valls-DCDS-2006}. Note that condition $a + \eps(1 + 2/q)
   \le 0$ implies $a + \eps < b$ and $aq+\eps < 0$, although the first
   implication seems to have been unnoticed
   in~\cite{Barreira-Valls-DCDS-2006}.
\end{example}

\begin{example}
   We will now consider the polynomial case, i.e., $\mu_n = \nu_n =
   1+n$. For these rates, condition~\eqref{eq:CondicaoTeo} becomes $a + \eps <
   b$ and condition~\eqref{int-conv} becomes $aq+\eps +1 < 0$. Since
      $$ \int_m^{+\infty} (1+t)^{aq+\eps} \dt
         \le \sum_{k = m}^{+\infty} \mu_k^{aq} \nu_k^\eps
         = \sum_{k = m}^{+\infty} (1+k)^{aq +\eps}
         \le \int_{m-1}^{+\infty} (1+t)^{aq+\eps} \dt,$$
   we obtain the estimates
      $$ \dfrac{1}{\abs{aq+\eps+1}} (1+m)^{aq+\eps+1}
         \le \sum_{k = m}^{+\infty} (1+k)^{aq +\eps}
         \le \dfrac{1}{\abs{aq+\eps+1} 2^{aq+\eps+1}} (1+m)^{aq+\eps+1}$$
   and this implies
      $$ \beta_m
         \le \dfrac{\abs{aq+\eps+1}^{1/q}}{2^{a-\eps(1+1/q)}}
            (1+m)^{-\eps(1+2/q)-1/q}$$
   and
      $$ \beta_m
         \ge 2^{a+\eps/q+1/q} \abs{aq+\eps+1}^{1/q}
         (1+m)^{-\eps(1+2/q)-1/q}.$$
   Hence
      $$ \dfrac{\mu_m^a \beta_m^{-1}}{\mu_{n-1}^a \beta_n^{-1}}
         \le 2^{-2a+\eps-1/q} \pfrac{1+m}{1+n}^{a+\eps(1+2/q)+1/q}$$ and to
   have condition~\eqref{eq:decreasing} we need to have $a+\eps(1+2/q)+1/q \le
   0$. Therefore, taking into account that $a+\eps(1+2/q)+1/q \le 0$ implies
   $a + \eps < b$ and also implies $aq+\eps +1 < 0$ when $\eps > 0$, if
   $a+\eps(1+2/q)+1/q \le 0$ and $\eps > 0$ we have a local stable manifold
   theorem. If $aq + 1 < 0$ and $\eps = 0$ we also have a local stable manifold
   theorem.
\end{example}

\begin{example}
   In this example we will consider a nonuniform dichotomy with the following
   growth rates
      $$ \mu_n = (1+n) \prts{1 + \log(1+n)}^\lbd
         \ \ \ \text{ and } \ \ \
         \nu_n = 1 + \log(1+n),$$
   with $\lbd \ge 0$. Then condition~\eqref{eq:CondicaoTeo} is satisfied  for
   every $a < 0 \le b$ and every $\eps \ge 0$. The series in~\eqref{int-conv}
   becomes
      $$ \sum_{k=1}^{\infty} \mu_k^{aq} \nu_k^\eps
         = \sum_{k=1}^\infty (1+k)^{aq} \prts{1+\log(1+k)}^{\lbd aq + \eps}$$
   and is convergent if $aq < -1$ or if $aq=-1$ and $\eps - \lbd < -1$.

   If $aq < -1$, there are positive constants $\theta_1$ and $\theta_2$ such
   that
      $$ \sum_{k=m}^{\infty} \mu_k^{aq} \nu_k^\eps
         \ge \theta_1 \ (1+m)^{aq+1} \prts{1 + \log(1+m)}^{\lbd aq + \eps}$$
   and
      $$ \sum_{k=m}^{\infty} \mu_k^{aq} \nu_k^\eps
         \le \theta_2 \ (1+m)^{aq+1} \prts{1 + \log(1+m)}^{\lbd aq + \eps}$$
   for every $m \in \N$ and this implies that
      $$ \beta_m
         \le \dfrac{\theta_1^{-1/q}}{2^a (1+\log 2)^{\lbd a -\eps(1+1/q)}} \
            (1+m)^{- 1/q} \prts{1 + \log(1+m)}^{- \eps(1+2/q)}$$
   and
      $$ \beta_m \ge \theta_2^{-1/q} (1+m)^{- 1/q} \
            \prts{1 + \log(1+m)}^{- \eps(1+2/q)}$$
   for every $m \in \N$. Hence
      $$ \dfrac{\mu_m^a \beta_m^{-1}}{\mu_{n-1}^a \beta_n^{-1}}
         \le A \pfrac{m+1}{n+1}^{a+1/q}
         \pfrac{1+\log(1+m)}{1+\log(1+n)}^{\lbd a + \eps(1+2/q)}$$
   with
      $$ A = \dfrac{\theta_2^{1/q}}{\theta_1^{1/q} 2^a (1+ \log 2)^{\lbd a -
      \eps(1+1/q)}}$$
   and since $aq < -1$ condition~\eqref{eq:decreasing} is always satisfied.
   Therefore if $aq < -1$ we have a local stable manifold theorem for every
   nonuniform dichotomy with these growth rates.

   When $aq=-1$ and $\eps - \lbd < -1$, there are positive constants $\theta_3$
   and $\theta_4$ such that
      $$ \theta_3 \prts{1 + \log(1+m)}^{-\lbd + \eps + 1}
         \le \sum_{k=m}^{+\infty} \mu_k^{-1} \nu_k^\eps
         \le \theta_4 \prts{1 + \log(1+m)}^{-\lbd + \eps + 1}$$
   for every $m \in \N$. This estimates imply that
      $$ \beta_m
         \le \dfrac{\theta_3^{-1/q}}{2^a (1+\log 2)^{\lbd a -\eps(1+1/q)}} \
            (1+m)^{-1/q} \prts{1 + \log(1+m)}^{- \eps(1+2/q) - 1/q}$$
   and
      $$ \beta_m
         \ge \theta_4^{-1/q} (1+m)^{- 1/q} \
         \prts{1 + \log(1+m)}^{- \eps(1+2/q) - 1/q}$$
   for every $m \in \N$. Hence
      $$ \dfrac{\mu_m^a \beta_m^{-1}}{\mu_{n-1}^a \beta_n^{-1}}
         \le \dfrac{\theta_4^{1/q}}{\theta_3^{1/q} 2^a (1+ \log 2)^{- \lbd/q -
         \eps(1+1/q)}}
         \pfrac{1+\log(1+m)}{1+\log(1+n)}^{(1-\lbd)/q + \eps(1+2/q)}$$
   for every $m \in \N$ and condition~\eqref{eq:decreasing} is satisfied if
   $(1-\lbd)/q + \eps(1+2/q) \le 0$. Since $(1-\lbd)/q + \eps(1+2/q) \le 0$ and
   $\eps > 0$ imply $\eps - \lbd  < -1$, if $aq=-1$ and $(1-\lbd)/q +
   \eps(1+2/q) \le 0$ and $\eps > 0$ we have a local stable manifold theorem
   for nonuniform dichotomies with these rates. If $aq = -1$, $\eps = 0$ and
   $\lbd > 1$ we also have a local stable manifold theorem.
\end{example}
\section{Proof of Theorem \ref{thm:local}} \label{Section:Proof}
Given $n\in \N$ and $v_n=(\xi,\eta)\in E_n \times F_n$, using~\eqref{eq:traj},
it follows that for each $m > n$, the trajectory $\prts{v_m}_{m > n}$
satisfies the following equations
\begin{align}
   x_m & = \cA_{m,n} \xi + \sum_{k=n}^{m-1} \cA_{m,k+1} P_{k+1} f_k(x_k,y_k),
      \label{eq:dyn-split1a}\\
   y_m  &= \cA_{m,n} \eta +\sum_{k=n}^{m-1} \cA_{m,k+1} Q_{k+1} f_k(x_k,y_k).
      \label{eq:dyn-split1b}
\end{align}
In view of the forward invariance mentioned in \eqref{thm:local:invar}, each
trajectory of~\eqref{eq:dyn} starting in
$\cV_{\phi,n,\delta/(CK),\tilde{\beta}}$ must be
in $\cV_{\phi,m,\delta,\beta}$ for every $m \ge n$, and thus the
equations~\eqref{eq:dyn-split1a} and~\eqref{eq:dyn-split1b} can be written in
the form
\begin{align}
   & x_m = \cA_{m,n} \xi + \sum_{k=n}^{m-1} \cA_{m,k+1} P_{k+1}
      f_k(x_k,\phi_k(x_k)), \label{eq:dyn-split2a}\\
   & \phi_m (x_m) = \cA_{m,n} \phi_n(\xi) + \sum_{k=n}^{m-1} \cA_{m,k+1}
      Q_{k+1} f_k (x_k,\phi_k(x_k)). \label{eq:dyn-split2b}
\end{align}
To prove that equations~\eqref{eq:dyn-split2a} and~\eqref{eq:dyn-split2b} have
solutions we will use Banach fixed point theorem in some suitable complete
metric spaces.

In $\cX_{\delta,\beta}$ we define a metric by
\begin{equation}\label{def:metric:X_beta}
   \|\phi - \psi\|'
   = \sup\set{\dfrac{\|\phi_n(\xi) - \psi_n(\xi)\|}{\|\xi\|} : n \in \N
         \text{ and } \xi \in B_n(\delta \beta_n)\setminus \set{0}}.
\end{equation}
for each $\phi=(\phi_n)_{n \in \N}$, $\psi=(\psi_n)_{n \in \N} \in
\cX_{\delta,\beta}$.
It is easy to see that $\cX_{\delta,\beta}$ is a complete metric space with the
metric
defined by~\eqref{def:metric:X_beta}.

We also need to consider the space $\cX^*_{\delta,\beta}$ of sequences
$\phi=(\phi_n)_{n \in
\N}$ with $\phi_n\colon E_n \to F_n$ such that the sequence
$\prts{\phi_n|B_n(\delta \beta_n)}_{n \in \N}$ is in $\cX_{\delta,\beta}$ and,
for each
$n \in \N$,
   $$ \phi_n(\xi)=\phi_n\prts{\dfrac{\delta \beta_n \xi}{\|\xi\|}}
      \text{ whenever $\xi \not \in B_n(\delta \beta_n)$.}$$ There is a
one-to-one correspondence between sequences in $\cX_{\delta,\beta}$ and in
$\cX^*_{\delta,\beta}$ because for each sequence of functions
$\phi=\prts{\phi_n}_{n \in
\N} \in \cX_{\delta,\beta}$ there is a unique extension $\widetilde{\phi} =
\prts{\widetilde{\phi}_n}_{n \in \N}$ such that each $\widetilde{\phi}_n$ is a
Lipschitz extension of $\phi_n$ to $\overline{B_n(\delta \beta_n)}$. This
one-to-one correspondence allows to define a metric in $\cX_{\delta,\beta}^*$.
For every
$\phi=\prts{\phi_n}_{n \in \N}, \psi=\prts{\phi_n}_{n \in \N} \in
\cX_{\delta,\beta}^*$,
we define this metric by
   $$ \|\phi - \psi\|' = \|\overline\phi - \overline\psi\|'$$
where $\overline\phi=\prts{\phi_n|_{B_n(\delta \beta_n)}}_{n \in \N}$,
$\overline\psi=\prts{\psi_n|_{B_n(\delta \beta_n)}}_{n \in \N}$ and the right
hand side is the metric defined by~\eqref{def:metric:X_beta}. Is is easy to
see
that with this metric $\cX^*_{\delta,\beta}$ is a complete metric space.

Furthermore, given $\phi=\prts{\phi_n}_{n \in \N}, \psi=\prts{\psi_n}_{n \in
\N} \in\cX_{\delta,\beta}^*$, one can easily verify that
\begin{align}
   & \|\phi_n(\xi)-\phi_n(\bar\xi)\| \le 2 \|\xi-\bar\xi\|
      \label{cond-phi-e-1}\\
   & \|\phi_n(\xi) - \psi_n(\xi)\| \le \|\phi - \psi\|' \|\xi\|
      \label{cond-phi-e-2}
\end{align}
for every $n \in \N$ and every $\xi, \bar\xi \in E_n$.

Let $\cB = \cB_{n,\delta,\beta}$ be the space of all sequences
$x=\prts{x_m}_{m\ge n}$
of functions
   $$ x_m \colon B_n(\delta \beta_n) \to E_m$$
such that
\begin{align}
   & x_n (\xi) = \xi, \ \ \ x_m(0) =0 \label{cond-x_m-0}\\
   & \|x_m(\xi) - x_m(\bar\xi)\|
         \le C \pfrac{\mu_m}{\mu_{n-1}}^a \nu_{n-1}^\eps \ \|\xi-\bar\xi\|
         \label{cond-x_m-1}
\end{align}
for every $m \ge n$ and every $\xi, \bar{\xi} \in B_n(\delta \beta_n)$. Making
$\bar\xi=0$ in~\eqref{cond-x_m-1} we obtain the following estimates
\begin{equation} \label{cond-x_m-1a}
   \|x_m(\xi)\|
   \le C \pfrac{\mu_m}{\mu_{n-1}}^a \nu_{n-1}^\eps \|\xi\|
   \le C \delta \pfrac{\mu_m}{\mu_{n-1}}^a \nu_{n-1}^\eps  \beta_n
\end{equation}
for every $m \ge n$ and every $\xi\in B_n(\delta \beta_n)$. In
$\cB_{n,\delta,\beta}$
we define a metric by
\begin{equation} \label{def:metric-of-B}
   \|x-y\|''
   = \sup \set{\dfrac{\|x_m (\xi) - y_m(\xi)\|}{\|\xi\|}
      \pfrac{\mu_m}{\mu_{n-1}}^{-a} \nu_{n-1}^{-\eps} \colon
      m \ge n, \ \xi \in B_n(\delta \beta_n)}
\end{equation}
for every $x$, $y \in \cB_{n,\delta,\beta}$. It is easy to see that with this
metric $\cB_{n,\delta,\beta}$ is a complete metric space.

\begin{lemma}\label{lemma:Exist-Suc-x_m}
   Given $\delta >0$ sufficiently small, for each $\phi\in
   \cX^*_{\delta,\beta}$ and $n \in \N$ there exists a unique sequence
   $x=x^{\phi}\in \cB_{n,\delta,\beta}$ satisfying the
   equation~\eqref{eq:dyn-split2a} for every $m \ge n$ and $\xi\in B_n(\delta
   \beta_n)$. Moreover, choosing $\delta > 0$ sufficiently small, we have
   \begin{equation} \label{ine:norm:x^phi-x^psi}
      \|x^\phi - x^\psi\|''
      \le \dfrac{C}{3} \nu_{n-1}^{-\eps} \|\phi - \psi\|'
   \end{equation}
   for each $\phi, \psi \in \cX^*_{\delta,\beta}$.
\end{lemma}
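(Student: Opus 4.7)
The plan is to apply Banach's fixed point theorem to the operator $T_\phi \colon \cB_{n,\delta,\beta} \to \cB_{n,\delta,\beta}$ which, for each $\phi \in \cX^*_{\delta,\beta}$, sends $x = (x_m)_{m \ge n}$ to the sequence whose $m$-th component is the right-hand side of \eqref{eq:dyn-split2a}. A fixed point of $T_\phi$ is precisely a solution of \eqref{eq:dyn-split2a}; since $\cB_{n,\delta,\beta}$ is complete, it suffices to show that $T_\phi$ maps the space into itself and is a contraction.

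For self-mapping, the identities $(T_\phi x)_n(\xi) = \xi$ and $(T_\phi x)_m(0) = 0$ are immediate from \eqref{cond-f-0} and \eqref{cond-phi-0}. The Lipschitz estimate \eqref{cond-x_m-1} for $(T_\phi x)_m$ splits into a linear part, bounded by $D(\mu_m/\mu_{n-1})^a\nu_{n-1}^\eps\|\xi-\bar\xi\|$ via \eqref{eq:dich-1}, and a nonlinear sum. For each summand, combining \eqref{cond-f-1} with the $2$-Lipschitz bound \eqref{cond-phi-e-1} on $\phi_k$ yields a factor $3^{q+1} c\|x_k(\xi) - x_k(\bar\xi)\|(\|x_k(\xi)\| + \|x_k(\bar\xi)\|)^q$; estimating the first factor by \eqref{cond-x_m-1} and the remaining $q$ factors by \eqref{cond-x_m-1a} (using $\|\xi\| \le \delta \beta_n$), then using \eqref{eq:dich-1} on $\cA_{m,k+1}P_{k+1}$ together with the identity $(\mu_m/\mu_k)^a (\mu_k/\mu_{n-1})^{a(q+1)} = (\mu_m/\mu_{n-1})^a \mu_k^{aq} \mu_{n-1}^{-aq}$, the sum is bounded by a universal constant times $D(\delta C)^q (\mu_m/\mu_{n-1})^a\nu_{n-1}^\eps \cdot \mu_{n-1}^{-aq}\nu_{n-1}^{\eps q}\beta_n^q \sum_{k=n}^{m-1}\mu_k^{aq}\nu_k^\eps \|\xi-\bar\xi\|$. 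The crucial observation is that the definition \eqref{def:beta} of $\beta_n$ is tuned so that
\[\mu_{n-1}^{-aq}\,\nu_{n-1}^{\eps q}\,\beta_n^q \sum_{k=n}^{+\infty} \mu_k^{aq}\nu_k^\eps = \nu_{n-1}^{-\eps},\]
which cancels the outstanding $\nu_{n-1}^\eps$ factor. Since $C > D$, choosing $\delta$ small enough gives \eqref{cond-x_m-1} with constant $C$.

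The contraction property follows from the same type of estimate applied to two sequences $x,y \in \cB_{n,\delta,\beta}$ evaluated at the same $\xi$: one gets $\|T_\phi x - T_\phi y\|'' \le K_2 \delta^q \|x - y\|''$, and for $\delta$ small $T_\phi$ is a contraction with unique fixed point $x^\phi$. To prove \eqref{ine:norm:x^phi-x^psi}, I subtract the fixed-point equations and in each summand split
\[f_k(x^\phi_k,\phi_k(x^\phi_k)) - f_k(x^\psi_k,\psi_k(x^\psi_k))\]
into (i) the piece varying only the first argument, controlled as in the contraction step by $\|x^\phi-x^\psi\|''$, and (ii) the piece varying only $\phi_k \to \psi_k$ at $x^\psi_k$, whose increment is bounded by $\|\phi-\psi\|'\|x^\psi_k\|$ via \eqref{cond-phi-e-2}. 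Running the same $\beta$-identity produces an inequality of the form $\|x^\phi-x^\psi\|'' \le \kappa \|x^\phi-x^\psi\|'' + K_3 \delta^q \nu_{n-1}^{-\eps}\|\phi-\psi\|'$ with $\kappa < 1$ for $\delta$ small; rearranging and further shrinking $\delta$ yields the factor $C/3$ in \eqref{ine:norm:x^phi-x^psi}.

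The main obstacle is the careful tracking of the $\nu_{n-1}$-powers: the exponent $\eps(1+1/q)$ of $\nu_{n-1}$ in \eqref{def:beta} is tuned precisely so that the tail identity above produces $\nu_{n-1}^{-\eps}$ as a residue, exactly matching the right-hand side of \eqref{ine:norm:x^phi-x^psi}; any other normalisation would leave extraneous powers of $\nu_{n-1}$ that could not be absorbed.
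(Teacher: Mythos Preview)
Your proposal is correct and follows essentially the same route as the paper: define the operator $J_\phi$ (your $T_\phi$) on $\cB_{n,\delta,\beta}$, verify the self-map and contraction properties via the $\alpha_k$ estimate and the dichotomy bound \eqref{eq:dich-1}, then compare the fixed points $x^\phi,x^\psi$ by the same triangle-inequality splitting. Your tail identity $\mu_{n-1}^{-aq}\nu_{n-1}^{\eps q}\beta_n^q\sum_{k\ge n}\mu_k^{aq}\nu_k^\eps=\nu_{n-1}^{-\eps}$ is just the paper's identity $\mu_{n-1}^{-aq}\nu_{n-1}^{\eps(q+1)}\beta_n^q\sum_{k\ge n}\mu_k^{aq}\nu_k^\eps=1$ divided by $\nu_{n-1}^{\eps}$, and the final rearrangement to extract the constant $C/3$ matches the paper's choice $c(6C\delta)^q D<1/6$.
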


\begin{proof}
   Given $\phi \in \cX_{\delta,\beta}^*$, we define an operator $J = J_\phi$
   in~$\cB_{n,\delta,\beta}$ by
   \begin{equation} \label{def:J}
      (Jx)_m(\xi)=
      \begin{cases}
         \xi & \text{ if } m = n,\\
         \cA_{m,n} \xi + \dsum_{k=n}^{m-1} \cA_{m,k+1} P_{k+1}
            f_k(x_k(\xi),\phi_k(x_k(\xi))) & \text{ if } m > n.
      \end{cases}
   \end{equation}
   One can easily verify from~\eqref{cond-x_m-0}, \eqref{cond-phi-0} and
   \eqref{cond-f-0} that $(Jx)_m(0)=0$ for every $m \ge n$.

   Let $x \in \cB_{n,\delta,\beta}$ and, for every $k \ge n$, put
      $$ \alpha_k  = \|f_k(x_k(\xi),\phi_k(x_k(\xi)))
         - f_k(x_k(\bar\xi),\phi_k(x_k(\bar\xi)))\|$$
   with $\xi, \bar\xi \in B_n(\delta \beta_n)$. From~\eqref{def:J} it follows
   that
   \begin{equation} \label{eq:identifica}
      \|(Jx)_m (\xi) - (Jx)_m (\bar\xi)\|
      \le \| \cA_{m,n}P_n\| \, \|\xi - \bar\xi\|
         + \sum_{k=n}^{m-1} \|\cA_{m,k+1} P_{k+1}\| \, \alpha_k
   \end{equation}
   for every $m > n$.
   From~\eqref{cond-f-1},~\eqref{cond-phi-e-1},~\eqref{cond-x_m-1}
   and~\eqref{cond-x_m-1a} we obtain
   \begin{equation}\label{ine:norm:f_k(xi)-f_k(bar_xi)}
      \begin{split}
         \alpha_k
         & \le c \prts{\|x_k(\xi) - x_k(\bar\xi)\|
            + \|\phi_k(x_k(\xi)) - \phi_k(x_k(\bar\xi))\|} \times \\
         & \qquad \times \prts{\|x_k(\xi)\|
            + \|\phi_k(x_k(\xi))\| + \| x_k(\bar\xi)\|
            + \|\phi_k(x_k(\bar\xi))\|}^q\\
         & \le 3^{q+1} c \|x_k(\xi) - x_k(\bar\xi)\|
            \prts{\|x_k(\xi)\| + \|x_k(\bar\xi)\|}^q\\
         & \le c (3C)^{q+1} \prts{2 \delta}^q
            \prts{\dfrac{\mu_k}{\mu_{n-1}}}^{aq+a}
             \nu_{n-1}^{\eps (q+1)} \beta_n^q
            \|\xi - \bar\xi\|.
      \end{split}
   \end{equation}
   By~\eqref{def:beta} we get
   \begin{equation} \label{eq:b_k,mu_k,nu_k=1}
      \mu_{n-1}^{-aq} \nu_{n-1}^{\eps (q+1)} \beta_n^q
      \sum_{k=n}^{+\infty} \mu_k^{aq} \nu_k^\eps=1
   \end{equation}
   and this together with ~\eqref{ine:norm:f_k(xi)-f_k(bar_xi)}
   and~\eqref{eq:dich-1} imply
   \begin{align*}
      & \sum_{k=n}^{m-1} \| \cA_{m,k+1} P_{k+1}\| \, \alpha_k\\
      & \le c (3C)^{q+1} D (2 \delta)^q \pfrac{\mu_m}{\mu_{n-1}}^a
         \|\xi - \bar\xi\|
         \mu_{n-1}^{-aq} \nu_{n-1}^{\eps (q+1)} \beta_n^q
         \sum_{k=n}^{m-1} \mu_k^{aq} \nu_k^\eps\\
      & \le c (3C)^{q+1} D (2 \delta)^q \pfrac{\mu_m}{\mu_{n-1}}^a
         \|\xi - \bar\xi\|.
   \end{align*}
   From last estimate,~\eqref{eq:identifica} and~\eqref{eq:dich-1} we have
   \begin{eqnarray*}
      && \|(Jx)_m (\xi) - (Jx)_m (\bar\xi)\|\\
      && \le D \pfrac{\mu_m}{\mu_{n-1}}^a \nu_{n-1}^\eps \|\xi - \bar\xi\|
         + c (3C)^{q+1} D (2 \delta)^q \pfrac{\mu_m}{\mu_{n-1}}^a
         \|\xi - \bar\xi\|
   \end{eqnarray*}
   for every $m \ge n$  and every $\xi, \bar\xi \in B_n(\delta \beta_n)$. Since
   $C > D$, choosing $\delta$ sufficiently small we obtain
      $$ \|(Jx)_m (\xi) - (Jx)_m (\bar\xi)\|
         \le C \pfrac{\mu_m}{\mu_{n-1}}^a \nu_{n-1}^\eps \|\xi - \bar\xi\|$$
   and this implies the inclusion
   $J(\cB_{n,\delta,\beta})\subset\cB_{n,\delta,\beta}$.

   We now show that $J$ is a contraction for the metric induced
   by~\eqref{def:metric-of-B}. Let $x,y\in \cB_{n,\delta,\beta}$. Then
   \begin{equation}\label{ine:norm:J_x_m(xi)-J_y_m(bar_xi)}
      \begin{split}
         & \|(Jx)_m(\xi)-(Jy)_m(\xi)\|\\
         & \le \sum_{k=n}^{m-1} \|\cA_{m,k+1}P_{k+1}\| \
            \|f_k(x_k(\xi),\phi_k(x_k(\xi))) -
            f_k(y_k(\xi),\phi_k(y_k(\xi)))\|
      \end{split}
   \end{equation}
   for every $m \ge n$ and every $\xi, \in B_n(\delta \beta_n)$. By
   \eqref{cond-f-1},~\eqref{cond-phi-e-1},~\eqref{def:metric-of-B}
   and~\eqref{cond-x_m-1a} we have for every $k \ge n$
   \begin{equation}\label{ine:norm:f_k(x_k)-f_k(y_k)}
      \begin{split}
         & \|f_k(x_k(\xi),\phi_k(x_k(\xi))) -
         f_k(y_k(\xi),\phi_k(y_k(\xi)))\|\\
         & \le c \prts{\| x_k(\xi) - y_k(\xi)\| + \|\phi_k(x_k(\xi)) -
         \phi_k(y_k(\xi))\|} \times\\
         & \qquad\qquad\qquad\qquad
            \times \prts{\|x_k(\xi)\|+ \|\phi_k(x_k(\xi))\| + \|y_k(\xi)\|
            + \|\phi_k(y_k(\xi))\|}^q\\
         & \le 3^{q+1} c \| x_k(\xi) - y_k(\xi)\|
            \prts{\|x_k(\xi)\|+ \|y_k(\xi)\|}^q\\
         & \le  3^{q+1} c
            \prts{\dfrac{\mu_k}{\mu_{n-1}}}^a \nu_{n-1}^\eps
            \| x - y\|'' \|\xi\|
            (2 C \delta)^q \prts{\dfrac{\mu_k}{\mu_{n-1}}}^{aq}
            \nu_{n-1}^{\eps q} \beta_n^q \\
         & \le 3^{q+1} c (2 C \delta)^q \prts{\dfrac{\mu_k}{\mu_{n-1}}}^{aq+a}
               \nu_{n-1}^{\eps (q+1)} \beta_n^q \| x - y\|'' \|\xi\|.
      \end{split}
   \end{equation}
   Hence, from \eqref{ine:norm:J_x_m(xi)-J_y_m(bar_xi)},~\eqref{eq:dich-1}
   and~\eqref{ine:norm:f_k(x_k)-f_k(y_k)} we have
   \begin{align*}
      & \|(Jx)_m(\xi) -(Jy)_m(\xi)\|\\
      & \le 3^{q+1} c (2C\delta)^q D \pfrac{\mu_m}{\mu_{n-1}}^a
         \|\xi\| \| x - y\|'' \mu_{n-1}^{-aq} \nu_{n-1}^{\eps (q+1)} \beta_n^q
         \sum_{k=n}^{m-1} \mu_k^{aq} \nu_n^\eps \\
      & \le 3^{q+1} c (2C\delta)^q D \pfrac{\mu_m}{\mu_{n-1}}^a
         \|\xi\| \| x - y\|''
   \end{align*}
   for every $m \ge n$ and every $\xi \in B_n(\delta \beta_n)$ and this
   implies
      $$ \|Jx-Jy\|'' \le 3^{q+1} c (2C\delta)^q D \|x-y\|''.$$
   Choosing $\delta > 0$ such that $3^{q+1} c (2C\delta)^q D < 1$ it
   follows that $J$ is a contraction in $\cB_{n,\delta,\beta}$. Because
   $\cB_{n,\delta,\beta}$ is complete, by
   the Banach fixed point theorem, the map $J$ has a unique fixed point
   $x^\phi$ in~$\cB_{n,\delta,\beta}$, which is thus the desired sequence.

   Next we will prove~\eqref{ine:norm:x^phi-x^psi}. Let $\phi, \psi \in
   \cX_{\delta,\beta}^*$. From~\eqref{eq:dyn-split2a} we have
   \begin{equation}\label{ine:norm:x_m^phi(xi)-x_m^psi(xi)}
      \begin{split}
         & \|x_m^\phi(\xi) - x_m^\psi(\xi)\|\\
         & \le \dsum_{k=n}^{m-1} \|\cA_{m,k+1} P_{k+1}\| \,
            \|f_k(x^\phi_k(\xi),\phi_k(x^\phi_k(\xi)))
            - f_k(x^\psi_k(\xi),\psi_k(x^\psi_k(\xi)))\|
      \end{split}
   \end{equation}
   for every $m \ge n$ and every $\xi \in B_n(\delta \beta_n)$.
   By~\eqref{cond-f-1},~\eqref{cond-phi-e-1},~\eqref{def:metric-of-B},
   \eqref{cond-phi-e-2} and~\eqref{cond-x_m-1a} it follows that
   \begin{equation}\label{ine:norm:f_k(x^phi_k...)- f_k(x^psi_k...)}
      \begin{split}
         & \|f_k(x^\phi_k(\xi),\phi_k(x^\phi_k(\xi)))
               - f_k(x^\psi_k(\xi),\psi_k(x^\psi_k(\xi)))\|\\
         & \le c \prts{\|x^\phi_k(\xi) - x^\psi_k(\xi)\|
            + \|\phi_k (x^\phi_k(\xi)) - \psi_k(x^\psi_k(\xi))\|} \times\\
         & \qquad\qquad\qquad
            \times \prts{\|x^\phi_k(\xi)\| + \|\phi_k (x^\phi_k(\xi))\|
            +\|x^\psi_k(\xi)\| + \|\psi_k(x^\psi_k(\xi))\|}^q\\
         & \le c \prts{3 \|x^\phi_k(\xi) - x^\psi_k(\xi)\|
            + \|\phi_k (x^\psi_k(\xi)) - \psi_k(x^\psi_k(\xi))\|}
            \prts{3 \|x^\phi_k(\xi)\| + 3 \|x^\psi_k(\xi)\|}^q\\
         & \le c (6C\delta)^q \prts{3 \|x^\phi - x^\psi\|''
            \pfrac{\mu_k}{\mu_{n-1}}^{a} \nu_{n-1}^\eps \|\xi\|
            + \|\phi - \psi\|' \|x^\psi_k(\xi)\|} \times\\
         &  \qquad\qquad\qquad\times
            \pfrac{\mu_k}{\mu_{n-1}}^{aq} \nu_{n-1}^{\eps q} \beta_n^q\\
         & \le c (6C\delta)^q
            \prts{3 \|x^\phi - x^\psi\|'' + C \|\phi - \psi\|'} \|\xi\|
            \pfrac{\mu_k}{\mu_{n-1}}^{aq+a} \nu_{n-1}^{\eps (q+1)} \beta_n^q
      \end{split}
   \end{equation}
   for every $k \ge n$. Hence by~\eqref{ine:norm:x_m^phi(xi)-x_m^psi(xi)},
   last
   inequality,~\eqref{eq:dich-1} and~\eqref{eq:b_k,mu_k,nu_k=1} we get
   \begin{align*}
      \|x_m^\phi(\xi) - x_m^\psi(\xi)\|
      & \le c (6C\delta)^q  D
         \prts{3 \|x^\phi - x^\psi\|'' + C \|\phi - \psi\|'} \times\\
      & \qquad\qquad \times \|\xi\|
         \pfrac{\mu_m}{\mu_{n-1}}^a \nu_{n-1}^\eps
         \mu_{n-1}^{-aq} \nu_{n-1}^{\eps q} \beta_n^q
         \dsum_{k=n}^m \mu_k^{aq} \nu_k^\eps\\
      & \le c (6C\delta)^q  D
         \prts{3 \|x^\phi - x^\psi\|'' + C \|\phi - \psi\|'} \|\xi\|
         \pfrac{\mu_m}{\mu_{n-1}}^a
   \end{align*}
   for every $m \ge n$ and every $\xi \in B_n(\delta \beta_n)$ and this
   implies
      $$ \|x^\phi - x^\psi\|''
         \le c (6C\delta)^q  D \nu_{n-1}^{-\eps}
         \prts{3 \|x^\phi - x^\psi\|'' + C \|\phi - \psi\|'}.$$
   Choosing $\delta > 0$ such that $c (6C\delta)^q  D < 1/6$ we
   have~\eqref{ine:norm:x^phi-x^psi}.
\end{proof}

We now represent by $\prts{x_{n,k}^\phi}_{k \ge n} \in \cB_{n,\delta,\beta}$
the unique sequence given by Lemma~\ref{lemma:Exist-Suc-x_m}.

\begin{lemma} \label{lemma:equiv}
   Given $\delta>0$ sufficiently small and $\phi \in \cX^*_{\delta,\beta}$ the
   following
   properties hold:
   \begin{enumerate}[$1)$]
      \item If for every $n \in \N$, $m \ge n$ and $\xi \in B_n(\delta
          \beta_n)$ the identity~\eqref{eq:dyn-split2b} holds with $x =
          x^\phi$, then
          \begin{equation} \label{eq:phi_n}
            \phi_n(\xi)
               = - \sum_{k=n}^\infty \cA_{k+1,n}^{-1}
                  Q_{k+1} f_k(x^\phi_{n,k}(\xi),
                  \phi_k(x^\phi_{n,k}(\xi))).
          \end{equation}
         for every $n \in \N$ and every $\xi \in B_n(\delta \beta_n)$.
      \item If for every $n \in \N$ and every $\xi \in B_n(\delta
          \beta_n)$ the equation~\eqref{eq:phi_n} holds, then
          \eqref{eq:dyn-split2b} holds  with $x = x^\phi$ for every $n \in
          \N$, every $m \ge n$ and every $\xi \in
          B_n(\delta\tilde\beta_n/(CK))$.
   \end{enumerate}
\end{lemma}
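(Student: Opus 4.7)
The plan is to derive each identity from the other using the cocycle identities for $\cA_{m,n}$ and for the fixed-point sequence $x^\phi$ supplied by Lemma~\ref{lemma:Exist-Suc-x_m}.

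For item 1), I would fix $n\in\N$ and $\xi\in B_n(\delta\beta_n)$, abbreviate $x_k:=x^\phi_{n,k}(\xi)$, and apply $\cA_{m,n}^{-1}$ to both sides of~\eqref{eq:dyn-split2b}. The cocycle relation gives $\cA_{m,n}^{-1}\cA_{m,k+1}=\cA_{k+1,n}^{-1}$ for $n\le k+1\le m$, so
\[
\phi_n(\xi)=\cA_{m,n}^{-1}\phi_m(x_m)-\sum_{k=n}^{m-1}\cA_{k+1,n}^{-1}Q_{k+1}f_k(x_k,\phi_k(x_k)).
\]
The identity~\eqref{eq:phi_n} will then drop out upon letting $m\to+\infty$ once the boundary term is shown to vanish: since $\phi_m(x_m)\in F_m$, we have $\phi_m(x_m)=Q_m\phi_m(x_m)$, and combining $\|\phi_m(x_m)\|\le 2\|x_m\|$ (from~\eqref{cond-phi-e-1} with $\bar\xi=0$), the bound~\eqref{cond-x_m-1a}, and the dichotomy estimate~\eqref{eq:dich-2} control $\|\cA_{m,n}^{-1}\phi_m(x_m)\|$ by a constant depending on $n$ and $\xi$ times $\mu_m^a\mu_{m-1}^{-b}\nu_{m-1}^\eps$, which tends to $0$ by~\eqref{eq:CondicaoTeo}. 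The convergence of the series in~\eqref{eq:phi_n} is then automatic, since the partial sums on the right-hand side have a limit.

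For item 2), the decisive step is the cocycle property of the fixed-point sequence: for $n\le m\le k$ and $\xi\in B_n(\delta\tilde\beta_n/(CK))$,
\[
x^\phi_{n,k}(\xi)=x^\phi_{m,k}\bigl(x^\phi_{n,m}(\xi)\bigr).
\]
To make sense of the right-hand side I need $x^\phi_{n,m}(\xi)\in B_m(\delta\beta_m)$, which is precisely the role of the smaller ball in the hypothesis: by~\eqref{cond-x_m-1a} and $\tilde\beta_n=\beta_n\nu_{n-1}^{-\eps}$,
\[
\|x^\phi_{n,m}(\xi)\|\le C\pfrac{\mu_m}{\mu_{n-1}}^{a}\nu_{n-1}^\eps\,\|\xi\|\le \delta\,\dfrac{\mu_m^a\beta_m^{-1}}{K\,\mu_{n-1}^a\beta_n^{-1}}\,\beta_m\le\delta\beta_m
\]
by~\eqref{eq:decreasing}. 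Once admissibility is established, the cocycle identity follows from the uniqueness assertion in Lemma~\ref{lemma:Exist-Suc-x_m}, because both $k\mapsto x^\phi_{n,k}(\xi)$ and $k\mapsto x^\phi_{m,k}(x^\phi_{n,m}(\xi))$ (restricted to $k\ge m$) lie in $\cB_{m,\delta,\beta}$ and satisfy the fixed-point equation~\eqref{eq:dyn-split2a} based at time $m$ with the same initial value $x^\phi_{n,m}(\xi)$.

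With the cocycle property in hand I would apply $\cA_{m,n}$ to~\eqref{eq:phi_n} for $\phi_n(\xi)$ and split the resulting series at $k=m-1$, using $\cA_{m,n}\cA_{k+1,n}^{-1}=\cA_{m,k+1}$ for $k\le m-1$ and $\cA_{m,n}\cA_{k+1,n}^{-1}=\cA_{k+1,m}^{-1}$ for $k\ge m$ (both derived by direct cancellation in the product defining $\cA$). The head of the split gives the finite sum appearing in~\eqref{eq:dyn-split2b}; applying the cocycle identity for $x^\phi$ inside the tail turns it into $\sum_{k=m}^{\infty}\cA_{k+1,m}^{-1}Q_{k+1}f_k\bigl(x^\phi_{m,k}(x^\phi_{n,m}(\xi)),\phi_k(x^\phi_{m,k}(x^\phi_{n,m}(\xi)))\bigr)$, and by~\eqref{eq:phi_n} used at time $m$ with initial value $x^\phi_{n,m}(\xi)$ this tail equals $-\phi_m(x^\phi_{n,m}(\xi))$; rearranging produces~\eqref{eq:dyn-split2b}. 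I expect the main obstacle to be the bookkeeping around the cocycle identity for $x^\phi$ and the verification that $x^\phi_{n,m}(\xi)\in B_m(\delta\beta_m)$, which is exactly where the smaller domain $B_n(\delta\tilde\beta_n/(CK))$ and the monotonicity hypothesis~\eqref{eq:decreasing} are used.
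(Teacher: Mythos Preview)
Your proposal is correct and follows essentially the same route as the paper: for item~1) you apply $\cA_{m,n}^{-1}$ to~\eqref{eq:dyn-split2b}, use the cocycle identity, and kill the boundary term via~\eqref{eq:CondicaoTeo}; for item~2) you first show $x^\phi_{n,m}(\xi)\in B_m(\delta\beta_m)$ from~\eqref{eq:decreasing}, then invoke uniqueness in Lemma~\ref{lemma:Exist-Suc-x_m} to get the cocycle property of $x^\phi$ and recognize the tail as $-\phi_m(x^\phi_{n,m}(\xi))$. The only cosmetic difference is that the paper first proves convergence of the series in~\eqref{eq:phi_n} by a direct estimate, whereas you deduce it from the convergence of the partial sums; both are fine.
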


\begin{proof}
   First we prove that the series in~\eqref{eq:phi_n} is convergent.
   From~\eqref{eq:dich-2},~\eqref{cond-f-1a},~\eqref{cond-phi-e-1}
   and~\eqref{cond-x_m-1a}, we conclude that for every $n \in \N$ and every
   $\xi \in B_n(\delta \beta_n)$
   \begin{align*}
      & \sum_{k=n}^{\infty} \|\cA_{k+1,n}^{-1} Q_{k+1}
         f_k(x_{n,k}^{\phi}(\xi),\phi_k(x_{n,k}^{\phi}(\xi)))\|\\
      & \le \sum_{k=n}^{\infty} \|\cA_{k+1,n}^{-1} Q_{k+1}\| \
         \|f_k(x_{n,k}^{\phi}(\xi),\phi_k(x_{n,k}^{\phi}(\xi)))\| \\
      & \le \sum_{k=n}^{\infty} D \pfrac{\mu_k}{\mu_n}^{-b} \nu_k^\eps
         c \prts{\|x^\phi_{n,k}(\xi)\| +
         \|\phi_k(x^\phi_{n,k}(\xi))\|}^{q+1}\\
      & \le c D  \sum_{k=n}^{\infty} \pfrac{\mu_k}{\mu_n}^{-b} \nu_k^\eps
         \prts{3 C \delta \pfrac{\mu_k}{\mu_{n-1}}^a \nu_{n-1}^{\eps}
         \beta_n}^{q+1}\\
      & \le c(3C\delta)^{q+1} D  \mu_n^b \mu_{n-1}^{-aq-a}
         \nu_{n-1}^{\eps(q+1)}
         \beta_n^{q+1} \sum_{k=n}^{\infty} \mu_k^{aq+a-b} \nu_k^\eps \\
      & \le c(3C\delta)^{q+1} D  \mu_{n-1}^{-aq} \nu_{n-1}^{\eps(q+1)}
         \beta_n^{q+1} \sum_{k=n}^{\infty} \mu_k^{aq} \nu_k^\eps \\
      & \le c(3C\delta)^{q+1} D \beta_n
   \end{align*}
   and thus the series converges.

   Now, let us suppose that~\eqref{eq:dyn-split2b} holds with $x = x^\phi$ for
   every $n \in \N$, every $m \ge n$ and every $\xi \in B_n(\delta \beta_n)$.
   Then, since $\cA_{m,n}^{-1}\cA_{m,k+1}=\cA_{k+1,n}^{-1}$ for $n \le k \le
   m-1$, equation~\eqref{eq:dyn-split2b} can be written in the following
   equivalent
   form
   \begin{equation} \label{eq:equiv1A}
      \phi_{n}(\xi) = \cA_{m,n}^{-1} \phi_m(x_{n,m}^{\phi}(\xi)) -
      \sum_{k=n}^{m-1} \cA_{k+1,n}^{-1}
      Q_{k+1} f_k(x_{n,k}^{\phi}(\xi),\phi_k(x_{n,k}^{\phi}(\xi))).
   \end{equation}
   Using~\eqref{eq:dich-2},~\eqref{cond-phi-e-1} and~\eqref{cond-x_m-1a},
   we have
   \begin{align*}
      \|\cA_{m,n}^{-1} \phi_m(x_{n,m}^{\phi}(\xi))\|
      & = \|\cA_{m,n}^{-1} Q_m \phi_m(x_{n,m}^\phi(\xi))\| \\
      & \le 2 D \pfrac{\mu_{m-1}}{\mu_n}^{-b} \nu_{m-1}^\eps
         \|x_{n,m}^\phi(\xi)\|\\
      & \le 2 D \pfrac{\mu_{m-1}}{\mu_n}^{-b} \nu_{m-1}^\eps
         C \delta \pfrac{\mu_m}{\mu_{n-1}}^a \nu_{n-1}^\eps \beta_n \\
      & \le 2 C D \delta \mu_m^a \mu_{m-1}^{-b} \nu_{m-1}^\eps
         \mu_n^b \mu_{n-1}^{-a} \nu_{n-1}^\eps \beta_n
   \end{align*}
   and by~\eqref{eq:CondicaoTeo} this converge to zero when $m\to\infty$.
   Hence, letting $m\to\infty$ in \eqref{eq:equiv1A} we obtain the
   identity~\eqref{eq:phi_n} for every $n \in \N$ and every $\xi \in B_n(\delta
   \beta_n)$.

   We now assume that for every $n \in \N$, $m \ge n$ and $\xi \in B_n(\delta
   \beta_n)$ the identity~\eqref{eq:phi_n} holds. If $\xi \in B_n(\delta
   \tilde\beta_n/(CK))$, then by \eqref{eq:decreasing} we get
   \begin{equation}\label{eq:norm:x_n,m(xi)}
      \|x_{n,m}(\xi)\|
      \le C \pfrac{\mu_m}{\mu_{n-1}}^a \nu_{n-1}^\eps
         \dfrac{\delta}{CK}\tilde \beta_n
      = \dfrac{\delta}{K} \dfrac{\mu_m^a \beta_m^{-1}}{\mu_{n-1}^a
         \beta_n^{-1}} \beta_m
      \le \delta \beta_m.
   \end{equation}
   Therefore
      $$ \cA_{m,n} \phi_n(\xi)
         = - \sum_{k=n}^{\infty} \cA_{m,n} \cA_{k+1,n}^{-1}
         Q_{k+1} f_k(x_{n,k}^{\phi}(\xi),\phi_k(x_{n,k}^{\phi}(\xi))),$$ and
   thus it follows from \eqref{eq:phi_n} and the uniqueness of the sequences
   $x^\phi$ that
   \begin{align*}
      \cA_{m,n} \phi_n(\xi) & + \sum_{k=n}^{m-1} \cA_{m,k+1}
         Q_{k+1} f_k(x_{n,k}^{\phi}(\xi),\phi_k(x_{n,k}^{\phi}(\xi))) \\
      & = - \sum_{k=m}^{\infty} \cA_{k+1,m}^{-1}
            Q_{k+1} f_k(x_{n,k}^{\phi}(\xi),\phi_k(x_{n,k}^{\phi}(\xi)))\\
      & = - \sum_{k=m}^{\infty} \cA_{k+1,m}^{-1}
            Q_{k+1} f_k(x_{m,k}^{\phi}(x_{n,m}^{\phi}(\xi)),
            \phi_k(x_{m,k}^{\phi}(x_{n,m}^{\phi}(\xi))))\\
      & = \phi_m(x_{n,m}^{\phi}(\xi))
   \end{align*}
   for every $n \in \N$, every $m \ge n$ and every $\xi \in B_n(\delta
   \beta_n/(CK))$. This proves the lemma.
\end{proof}

\begin{lemma} \label{lemma:Exist-Suc-phi}
   Given $\delta > 0$ sufficiently small there is a unique $\phi \in
   \cX^*_{\delta,\beta}$ such that
      $$ \phi_n(\xi)
         = - \sum_{k=n}^\infty \cA_{k+1,n}^{-1}
            Q_{k+1} f_k(x^\phi_k(\xi), \phi_k(x^\phi_k(\xi)))$$
   for every $n \in \N$ and every $\xi \in B_n(\delta \beta_n)$.
\end{lemma}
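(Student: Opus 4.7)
The plan is a standard Banach fixed point argument on $\cX^*_{\delta,\beta}$ equipped with the metric $\|\cdot\|'$, applied to the operator $\Phi$ defined for $\phi \in \cX^*_{\delta,\beta}$ by
$$(\Phi\phi)_n(\xi) = -\sum_{k=n}^{\infty} \cA_{k+1,n}^{-1}\, Q_{k+1}\, f_k\bigl(x^\phi_{n,k}(\xi), \phi_k(x^\phi_{n,k}(\xi))\bigr)$$
for $\xi \in B_n(\delta\beta_n)$, and then extended to all of $E_n$ by the radial rule built into the definition of $\cX^*_{\delta,\beta}$. A fixed point of $\Phi$ in $\cX^*_{\delta,\beta}$ is exactly the sequence required by the lemma.

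First I would check that $\Phi(\cX^*_{\delta,\beta}) \subset \cX^*_{\delta,\beta}$. Absolute convergence of the series together with the pointwise bound $\|(\Phi\phi)_n(\xi)\|\le c(3C\delta)^{q+1}D\beta_n$ are already established in the proof of Lemma~\ref{lemma:equiv}. The identity $(\Phi\phi)_n(0)=0$ follows from $x^\phi_{n,k}(0)=0$ (see~\eqref{cond-x_m-0}), $\phi_k(0)=0$, and $f_k(0)=0$. The nontrivial step is the Lipschitz estimate $\|(\Phi\phi)_n(\xi)-(\Phi\phi)_n(\bar\xi)\|\le\|\xi-\bar\xi\|$: bounding the variation of $f_k\bigl(x^\phi_{n,k},\phi_k(x^\phi_{n,k})\bigr)$ via~\eqref{cond-f-1},~\eqref{cond-phi-e-1},~\eqref{cond-x_m-1} and~\eqref{cond-x_m-1a}, then weighting by $\|\cA_{k+1,n}^{-1}Q_{k+1}\|$ using~\eqref{eq:dich-2}, one finds an extra factor $(\mu_k/\mu_n)^{-b}$ which can be absorbed by using $a-b\le 0$ and the monotonicity of $\mu_k$ so that $\mu_n^b\mu_{n-1}^{-a}\mu_k^{a-b}\le 1$; the identity~\eqref{eq:b_k,mu_k,nu_k=1} then collapses the remaining sum and gives an estimate of the form $3^{q+1}cD(2C\delta)^q\|\xi-\bar\xi\|$, which is below $\|\xi-\bar\xi\|$ for $\delta$ small.

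Next I would verify that $\Phi$ is a contraction. For $\phi,\psi\in\cX^*_{\delta,\beta}$ and $\xi\in B_n(\delta\beta_n)$,
$$\|(\Phi\phi)_n(\xi)-(\Phi\psi)_n(\xi)\|\le\sum_{k=n}^{\infty}\|\cA_{k+1,n}^{-1}Q_{k+1}\|\,\alpha_k,$$
where $\alpha_k=\|f_k(x^\phi_{n,k}(\xi),\phi_k(x^\phi_{n,k}(\xi)))-f_k(x^\psi_{n,k}(\xi),\psi_k(x^\psi_{n,k}(\xi)))\|$ has already been bounded in the estimate~\eqref{ine:norm:f_k(x^phi_k...)- f_k(x^psi_k...)} from Lemma~\ref{lemma:Exist-Suc-x_m} in terms of a constant multiple of $\bigl(3\|x^\phi-x^\psi\|''+C\|\phi-\psi\|'\bigr)\|\xi\|(\mu_k/\mu_{n-1})^{aq+a}\nu_{n-1}^{\eps(q+1)}\beta_n^q$. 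Inserting~\eqref{ine:norm:x^phi-x^psi} to replace $\|x^\phi-x^\psi\|''$ by $(C/3)\nu_{n-1}^{-\eps}\|\phi-\psi\|'$, and summing through~\eqref{eq:dich-2} and~\eqref{eq:b_k,mu_k,nu_k=1} exactly as in the previous paragraph, one obtains
$$\|(\Phi\phi)_n(\xi)-(\Phi\psi)_n(\xi)\|\le\theta(\delta)\,\|\phi-\psi\|'\,\|\xi\|$$
with $\theta(\delta)\to 0$ as $\delta\to 0$. Dividing by $\|\xi\|$ and taking the supremum yields $\|\Phi\phi-\Phi\psi\|'\le\theta(\delta)\|\phi-\psi\|'$, a strict contraction for $\delta$ small enough. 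Banach's fixed point theorem then produces the unique fixed point in $\cX^*_{\delta,\beta}$.

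The only real obstacle is bookkeeping: the precise calibration of $\beta_m$ in~\eqref{def:beta} is what makes the weights $D(\mu_k/\mu_n)^{-b}\nu_k^\eps$ from~\eqref{eq:dich-2} pair, via $b\ge 0$ and~\eqref{eq:b_k,mu_k,nu_k=1}, with the nonlinear factors $(\mu_k/\mu_{n-1})^{aq+a}\nu_{n-1}^{\eps(q+1)}\beta_n^q$ to produce a constant independent of $n$. Since both the invariance of $\cX^*_{\delta,\beta}$ and the contraction bound come with a positive power of $\delta$, a single choice of $\delta$ small enough secures both requirements.
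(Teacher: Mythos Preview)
Your proposal is correct and follows essentially the same approach as the paper: define the operator $\Phi$ on $\cX^*_{\delta,\beta}$, verify $\Phi(\cX^*_{\delta,\beta})\subset\cX^*_{\delta,\beta}$ via the Lipschitz estimate obtained from~\eqref{ine:norm:f_k(xi)-f_k(bar_xi)}, \eqref{eq:dich-2} and~\eqref{eq:b_k,mu_k,nu_k=1} (using $\mu_n^b\mu_{n-1}^{-a}\mu_k^{a-b}\le 1$), and then prove contraction by combining~\eqref{ine:norm:f_k(x^phi_k...)- f_k(x^psi_k...)} with~\eqref{ine:norm:x^phi-x^psi}. The paper's explicit contraction constant is $2cC^{q+1}D(6\delta)^q$, matching your $\theta(\delta)$ up to harmless bookkeeping.
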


\begin{proof}
   We consider the operator $\Phi$ defined for each $\phi\in
   \cX^*_{\delta,\beta}$ by
   \begin{equation} \label{eq:op-Phi}
      (\Phi \phi)_n (\xi) =
      \begin{cases}
         - \dsum_{k=n}^{\infty} \cA_{k+1,n}^{-1} Q_{k+1} f_k
            (x_k^{\phi}(\xi), \phi_k(x_k^{\phi}(\xi)))
            & \text{ if } \xi \in B_n(\delta \beta_n),\\
         (\Phi \phi)_n (\delta \beta_n \xi/\|\xi\|)
            & \text{ if } \xi \not\in B_n(\delta \beta_n),
      \end{cases}
   \end{equation}
   where $x^{\phi}=(x^\phi_k)_{k\ge n} \in \cB_{n,\delta,\beta}$ is the unique
   sequence given by Lemma~\ref{lemma:Exist-Suc-x_m}. It follows
   from~\eqref{cond-f-0},~\eqref{cond-x_m-0},~\eqref{cond-phi-0}
   and~\eqref{eq:op-Phi} that $(\Phi\phi)_n(0)=0$ for each $n\in\N$.

   Furthermore, given $n \in \N$ and $\xi, \bar\xi \in B_n(\delta \beta_n)$,
   by~\eqref{eq:dich-2},~\eqref{ine:norm:f_k(xi)-f_k(bar_xi)}
   and~\eqref{eq:b_k,mu_k,nu_k=1} we have
   \begin{align*}
      & \|(\Phi \phi)_n (\xi) - (\Phi \phi)_n (\bar\xi)\| \\
      & \le \sum_{k=n}^{\infty} \|\cA_{k+1,n}^{-1} Q_{k+1}\|
         \cdot \|f_k(x^\phi_k(\xi),\phi_k(x^\phi_k(\xi)))
         - f_k(x^\phi_k(\bar\xi),\phi_k(x^\phi_k(\bar\xi)))\|\\
      & \le c (3C)^{q+1} D \prts{2 \delta}^q \|\xi - \bar\xi\|
         \mu_{n-1}^{-aq-a} \mu_n^b \nu_{n-1}^{\eps (q+1)} \beta_n^q
         \sum_{k=n}^{\infty} \mu_k^{aq+a-b} \nu_k^\eps\\
      & \le c (3C)^{q+1} D \prts{2 \delta}^q \|\xi - \bar\xi\|
         \mu_{n-1}^{-aq} \nu_{n-1}^{\eps (q+1)} \beta_n^q
         \sum_{k=n}^{\infty} \mu_k^{aq} \nu_k^\eps\\
      & = c (3C)^{q+1} D \prts{2 \delta}^q \|\xi - \bar\xi\|.
   \end{align*}
   Hence, choosing $\delta>0$ (independently of $\phi$, $n$
   and $\xi$) such that $ c (3C)^{q+1} D \prts{2 \delta}^q \le 1$  we have
      $$ \|(\Phi \phi)_n (\xi) - (\Phi \phi)_n (\bar\xi)\|
         \le \|\xi - \bar\xi\|.$$
   Therefore $\Phi(\cX^*_{\delta,\beta})\subset\cX^*_{\delta,\beta}$.

   We now show that $\Phi$ is a contraction. Given
   $\phi,\psi\in\cX^*_{\delta,\beta}$
   and $n\in \N$, let $x^{\phi}$ and $x^{\psi}$ be the unique sequences given
   by Lemma~\ref{lemma:Exist-Suc-x_m} respectively for $\phi$ and $\psi$.
   By~\eqref{eq:dich-2},~\eqref{ine:norm:f_k(x^phi_k...)-
   f_k(x^psi_k...)},~\eqref{ine:norm:x^phi-x^psi}
   and~\eqref{eq:b_k,mu_k,nu_k=1} we have
   \begin{align*}
      & \|(\Phi \phi)_n (\xi)-(\Phi \psi)_n(\xi)\| \\
      & \le \sum_{k=n}^{\infty} \|\cA_{k+1,n}^{-1} Q_{k+1}\|
         \|f_k(x^\phi_k(\xi),\phi_k(x^\phi_k(\xi)))
         - f_k(x^\psi_k(\xi),\phi_k(x^\psi_k(\xi)))\|\\
      & \le c D (6C\delta)^q
         \prts{3 \|x^\phi - x^\psi\|'' + C \|\phi - \psi\|'} \|\xi\|
         \mu_{n-1}^{-aq-a} \mu_n^b \nu_{n-1}^{\eps (q+1)} \beta_n^q
         \sum_{k=n}^{\infty} \mu_k^{aq+a-b} \nu_k^\eps\\
      & \le 2 c C^{q+1} D (6 \delta)^q  \|\xi\| \|\phi - \psi\|'
         \mu_{n-1}^{-aq} \nu_{n-1}^{\eps(q+1)} \beta_n^q
         \sum_{k=n}^{\infty} \mu_k^{aq} \nu_k^\eps \\
      & = 2 c C^{q+1} D (6 \delta)^q  \|\xi\| \|\phi - \psi\|'
   \end{align*}
   for every $n \in \N$ and every $\xi \in B_n(\delta \beta_n)$
   and this implies
      $$ \|\Phi \phi- \Phi \psi\|'
         \le 2 c C^{q+1} D (6 \delta)^q \|\phi - \psi\|'.$$ Choosing $\delta >
   0$ such that $2 c C^{q+1} D (6 \delta)^q<1$ it follows that $\Phi$ is a
   contraction in $\cX^*_{\delta,\beta}$. Therefore the map $\Phi$ has a unique
   fixed
   point $\phi$ in~$\cX^*_{\delta,\beta}$ that is the    desired sequence.
\end{proof}

We are now in conditions to prove Theorem~\ref{thm:local}.

\begin{proof}[Proof of Theorem~$\ref{thm:local}$]
   By Lemma~\ref{lemma:Exist-Suc-x_m}, for each $\phi\in \cX^*_{\delta,\beta}$
   there is
   a unique sequence $x^{\phi}\in\cB_{n,\delta,\beta}$
   satisfying~\eqref{eq:dyn-split2a}. It
   remains to show that there is a $\phi$ and a corresponding $x^\phi$ that
   satisfie~\eqref{eq:dyn-split2b}. By Lemma~\ref{lemma:equiv}, this is
   equivalent to solve~\eqref{eq:phi_n}. Finally, by
   Lemma~\ref{lemma:Exist-Suc-phi}, there is a unique solution of
   \eqref{eq:phi_n}. This establishes the existence of the stable manifolds for
   $\delta>0$ sufficiently small. Moreover, for each $n\in \N$, $m\ge n$ and
   $\xi,\bar{\xi}\in B_n(\delta \tilde\beta_n/(CK))$ it follows
   from~\eqref{eq:norm:x_n,m(xi)} and~\eqref{cond-phi-1} that
   \begin{align*}
      & \|\cF_{m,n}(\xi,\phi_n(\xi)) - \cF_{m,n}(\xi,\phi_n(\bar\xi))\|\\
      & \le \|x_m(\xi) - x_m(\bar\xi)\|
         + \|\phi_m(x_m(\xi)) - \phi_m(x_m(\bar\xi))\|\\
      & \le 2 \|x_m(\xi) - x_m(\bar\xi)\|\\
      & \le 2 C \pfrac{\mu_m}{\mu_{n-1}}^a \nu_{n-1}^{\eps} \|\xi-\bar \xi\|.
   \end{align*}
   Hence we obtain~\eqref{thm:ine:norm:F_mn(xi...)-F_mn(barxi...)} and the
   theorem is proved.
\end{proof}
\section{Behavior under perturbations}\label{section:perturbations}
In this section we assume that equation~\eqref{eq:lin:dif} admits a
$(\mu,\nu)$-dichotomy for some $D \ge 1$, $a <  0\le b$ and $\eps \ge 0$. Given
$c > 0$ and $q > 1$, let $\cP_{c,q}$ be the class of all sequences of function
$f=\prts{f_n}_{n \in \N}$ such that $f_n \colon X \to X$ and verify
conditions~\eqref{cond-f-0} and~\eqref{cond-f-1} with the given $c$ and $q$. In
$\cP_{c,q}$ we can define a metric by
\begin{equation}\label{metric:P_c,q}
	\|f-\bar f\|'''
   =\sup\set{\dfrac{\|f_n(u)-\bar f_n(u)\|}{\|u\|^{q+1}} \colon
      n \in \N, u \in X \setm{0}},
\end{equation}
for every $f=\prts{f_n}_{n \in \N}, \bar f=\prts{\bar f_n}_{n \in \N} \in
\cP_{c,q}$.

The purpose of this section is to see how the manifolds in
Theorem~\ref{thm:local} vary with the perturbations. To do this we consider two
sequence of perturbations $f, \bar f \in \cP_{c,q}$ and the functions $\phi$
and $\bar\phi$ given by Theorem~\ref{thm:local} when we perturb
equation~\eqref{eq:dyn} with $f$ and $\bar f$, respectively, and we compare the
distance between $\phi$ and $\bar\phi$ in the metric given
by~\eqref{def:metric:X_beta} with the distance between $f$ and $\bar f$ in the
metric given by~\eqref{metric:P_c,q}.

\begin{theorem}
    Let $c > 0$ and $q > 1$. Suppose that equation~\eqref{eq:lin:dif} admits a
    $(\mu,\nu)$-dichotomy for some $D \ge 1$, $a <  0\le b$ and $\eps > 0$ and
    that the hypothesis of Theorem~\ref{thm:local} are satisfied. Then,
    choosing  for $\delta>0$ sufficiently small, we have
		$$ \|\phi-\bar\phi\|' \le \|f-\bar f\|'''$$
   for every $f, \bar f \in \cP_{c,q}$, where $\phi, \bar \phi \in
   \cX_{\delta,\beta}$ are the functions given by Theorem~\ref{thm:local} for
   the same constant $C > D$ corresponding to the perturbations $f$ and $\bar
   f$, respectively.
\end{theorem}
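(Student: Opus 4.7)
The plan is to view $\phi$ and $\bar\phi$ as the fixed points, respectively, of the two contractions $\Phi^f$ and $\Phi^{\bar f}$ on $\cX^*_{\delta,\beta}$ furnished by Lemma~\ref{lemma:Exist-Suc-phi} --- one for each perturbation --- and then to combine the standard ``continuity of fixed points'' argument with a Lipschitz-in-$f$ estimate for the auxiliary sequences of Lemma~\ref{lemma:Exist-Suc-x_m}. Writing
$$\phi - \bar\phi = \Phi^f \phi - \Phi^{\bar f}\bar\phi
   = \prts{\Phi^f \phi - \Phi^f \bar\phi} + \prts{\Phi^f \bar\phi - \Phi^{\bar f}\bar\phi},$$
and applying to the first summand the contraction estimate $\|\Phi^f \phi - \Phi^f \bar\phi\|' \le 2cC^{q+1}D(6\delta)^q\,\|\phi - \bar\phi\|'$ obtained in the proof of Lemma~\ref{lemma:Exist-Suc-phi}, the problem is reduced to bounding $\|\Phi^f \bar\phi - \Phi^{\bar f}\bar\phi\|'$ by some constant depending on $\delta$ times $\|f - \bar f\|'''$ and then choosing $\delta$ small enough to absorb that constant.

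The first technical ingredient is an analogue of~\eqref{ine:norm:x^phi-x^psi} comparing, for a fixed $\phi \in \cX^*_{\delta,\beta}$ and $f, \bar f \in \cP_{c,q}$, the sequences $x^{\phi,f}$ and $x^{\phi,\bar f}$ obtained by applying Lemma~\ref{lemma:Exist-Suc-x_m} with each of the two perturbations in place of $f$, namely an estimate of the form
$$\|x^{\phi,f} - x^{\phi,\bar f}\|'' \le K_1(\delta)\,\nu_{n-1}^{-\eps}\,\|f - \bar f\|''',$$
with $K_1(\delta) \to 0$ as $\delta \to 0$. Its proof mirrors the second half of the proof of Lemma~\ref{lemma:Exist-Suc-x_m}: split $x^{\phi,f} - x^{\phi,\bar f} = \prts{J_f x^{\phi,f} - J_f x^{\phi,\bar f}} + \prts{J_f x^{\phi,\bar f} - J_{\bar f} x^{\phi,\bar f}}$, contract the first term using bounds of the type of~\eqref{ine:norm:f_k(x_k)-f_k(y_k)}, and bound the second by the inequality $\|f_k(u) - \bar f_k(u)\| \le \|f - \bar f\|'''\,\|u\|^{q+1}$ combined with \eqref{cond-x_m-1a}, \eqref{cond-phi-e-1} and the key identity~\eqref{eq:b_k,mu_k,nu_k=1}.

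To bound $\|\Phi^f \bar\phi - \Phi^{\bar f}\bar\phi\|'$ I would insert in the expression for $\Phi^f \bar\phi$ the intermediate quantity in which $f_k$ has been replaced by $\bar f_k$ while the evaluation point is still $x^{\bar\phi,f}_k$: the resulting error is controlled directly by $\|f - \bar f\|'''$ via sums handled exactly as in the proof of Lemma~\ref{lemma:Exist-Suc-phi} (invoking~\eqref{eq:dich-2}, \eqref{cond-x_m-1a} and~\eqref{eq:b_k,mu_k,nu_k=1}), whereas the remaining term, which involves only the change of $x$-sequence, is dominated by the Lipschitz estimate~\eqref{cond-f-1} applied to $\bar f_k$ together with the comparison of the previous paragraph. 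This yields $\|\Phi^f \bar\phi - \Phi^{\bar f}\bar\phi\|' \le K_2(\delta)\,\|f - \bar f\|'''$ with $K_2(\delta) \to 0$ as $\delta \to 0$; inserting into the inequality of the first paragraph and choosing $\delta$ so small that $K_2(\delta)/\prts{1 - 2cC^{q+1}D(6\delta)^q} \le 1$ proves the theorem. The main obstacle is setting up the joint comparison of the second paragraph cleanly, so that the new $\|f - \bar f\|'''$ contribution is separated from the implicit self-referential dependence of $\|x^{\phi,f} - x^{\phi,\bar f}\|''$ on itself; once this is done, everything else is a reshuffling of computations already performed in the preceding two lemmas.
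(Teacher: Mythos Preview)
Your approach is correct and reaches the same conclusion, but the organization differs from the paper's. The paper does not pass through the abstract ``continuity of fixed points'' template. Instead it compares directly the pair $(\phi,x^\phi)$ (associated with $f$) with the pair $(\bar\phi,x^{\bar\phi})$ (associated with $\bar f$), varying perturbation and graph simultaneously. Concretely, from~\eqref{eq:phi_n} and~\eqref{eq:dyn-split2a} the paper derives a coupled system
\[
\|\phi-\bar\phi\|' \le A_1\|f-\bar f\|''' + B_1\|x^\phi-x^{\bar\phi}\|'' + C_1\|\phi-\bar\phi\|',
\qquad
\|x^\phi-x^{\bar\phi}\|'' \le A_2\|f-\bar f\|''' + B_2\|x^\phi-x^{\bar\phi}\|'' + C_2\|\phi-\bar\phi\|',
\]
with all constants of the form $\text{const}\cdot\delta^q$, and then solves the system for small $\delta$. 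In particular the paper never introduces the intermediate object $x^{\bar\phi,f}$ (the $x$-sequence for $\bar\phi$ driven by $f$) that is central to your decomposition. Your route is more modular --- it cleanly separates the dependence of $x$ on $f$ (your second paragraph) from the dependence of $\Phi$ on $f$ (your third paragraph) and then invokes the contraction constant of Lemma~\ref{lemma:Exist-Suc-phi} once --- whereas the paper's route is slightly shorter because it handles both variations at once and avoids one auxiliary comparison. The underlying estimates (all powered by~\eqref{eq:b_k,mu_k,nu_k=1}) are the same in both arguments.
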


\begin{proof}
   Let $n \in \N$ and $\xi \in B_n(\delta \beta_n)$. From~\eqref{eq:phi_n},
   putting for every $k \ge n$
      $$ \gamma_k
         := \|f_k(x^\phi_k(\xi), \phi_k(x^\phi_k(\xi)))
            - \bar f_k(x^{\bar\phi}_k(\xi),
            \bar\phi_k(x^{\bar\phi}_k(\xi)))\|,$$
   we obtain
	\begin{equation} \label{loc:eq:MajP3}
      \|\phi_n (\xi)-\bar \phi_n (\xi)\|
      \le \dsum_{k=n}^{+\infty} \|\cA_{k+1,n}^{-1} Q_{k+1}\| \,
         \gamma_k,
	\end{equation}
   where $x^\phi, x^{\bar\phi} \in \cB_{n,\delta,\beta}$ are the sequences of
   functions given by Lemma~\ref{lemma:Exist-Suc-x_m} associated with
   $(f,\phi)$ and $(\bar f, \bar \phi)$, respectively. By \eqref{metric:P_c,q},
   \eqref{cond-f-1}, \eqref{cond-phi-e-1}, \eqref{cond-phi-e-2},
   \eqref{cond-x_m-1a} and~\eqref{def:metric-of-B} we have for $k \ge n$
   \begin{equation} \label{norm_f_x,phi_-_bar-f_bar_x,bar-phi}
      \begin{split}
   		\gamma_k
   		& \le \|f_k(x^\phi_k(\xi),\phi_k(x^\phi_k(\xi)))
               - \bar f_k(x^\phi_k(\xi),\phi_k(x^\phi_k(\xi)))\|\\
            & \quad + \|\bar f_k(x^\phi_k(\xi),\phi_k(x^\phi_k(\xi)))
               - \bar f_k(x^{\bar\phi}_k
               (\xi),\bar\phi_k(x^{\bar\phi}_k(\xi)))\|\\
   		& \le 3^{q+1} \|f-\bar f\|'''  \|x^\phi_k (\xi)\|^{q+1}\\
            & \quad + 3^{q+1}c  \|x^\phi_k (\xi)-x^{\bar\phi}_k (\xi)\|
               (\|x^\phi_k (\xi)\|+\|x^{\bar\phi}_k (\xi)\|)^q \\
      		& \quad + 3^q c \|\phi-\bar \phi\|' \|x^{\bar\phi}_k (\xi)\|
               \prts{\|x^\phi_k (\xi)\|+\|x^{\bar \phi}_k (\xi)\|}^q \\
   		& \le 3^{q+1} C^{q+1} \delta^q \|f-\bar f\|''' \|\xi\|
               \pfrac{\mu_k}{\mu_{n-1}}^{aq+a}
               \nu_{n-1}^{\eps(q+1)} \beta_n^q\\
      		& \quad + 2^q 3^{q+1}c C^q \delta^q \|x^\phi-x^{\bar\phi}\|''
               \|\xi\| \pfrac{\mu_k}{\mu_{n-1}}^{aq+a} \nu_{n-1}^{\eps(q+1)}
               \beta_n^q\\
            & \quad + 2^q 3^q c C^{q+1} \delta^q \|\phi-\bar \phi\|' \|\xi\|
               \pfrac{\mu_k}{\mu_{n-1}}^{aq+a} \nu_{n-1}^{\eps (q + 1)}
               \beta_n^q
      \end{split}
   \end{equation}
   and using~\eqref{eq:b_k,mu_k,nu_k=1}, the last
   estimate,~\eqref{loc:eq:MajP3}, we get
   \begin{equation} \label{norm:phi(s,xi)-bar_phi(s,xi)}
      \begin{split}
         \|\phi_n (\xi)-\bar \phi_n (\xi)\|
   		& \le 3^{q+1} C^{q+1} D \delta^q \|f-\bar f\|''' \|\xi\|
            \mu_{n-1}^{-aq} \nu_{n-1}^{\eps (q + 1)} \beta_n^q
            \dsum_{k=n}^{+\infty} \mu_k^{aq} \nu_k^\eps  \\
   		& \quad + 2^q 3^{q+1}c C^{q} D \delta^q
            \|x^\phi-x^{\bar\phi}\|'' \|\xi\|
            \mu_{n-1}^{-aq} \nu_{n-1}^{\eps (q+1)} \beta_n^q
            \dsum_{k=n}^{+\infty} \mu_k^{aq} \nu_k^\eps  \\
         & \quad + 2^q 3^q c C^{q+1} D \delta^q \|\phi-\bar \phi\|' \|\xi\|
            \mu_{n-1}^{-aq} \nu_{n-1}^{\eps (q + 1)} \beta_n^q
            \dsum_{k=n}^{+\infty}
            \mu_k^{aq} \nu_k^\eps  \\
         & \le 3^{q+1} C^{q+1} D \delta^q \|f-\bar f\|''' \|\xi\|
            + 2^q 3^{q+1}c C^q D \delta^q \|x^\phi-x^{\bar\phi}\|'' \|\xi\|\\
         & \quad + 2^q 3^q c C^{q+1} D \delta^q \|\phi-\bar \phi\|' \|\xi\|.
      \end{split}
   \end{equation}
   Now, we will estimate $\|x^\phi-x^{\bar\phi}\|''$.
   By~\eqref{eq:dyn-split2a},
   ~\eqref{norm_f_x,phi_-_bar-f_bar_x,bar-phi} and~\eqref{eq:dich-1} we obtain
   for every $m \ge n$ and every  $\xi \in B_n(\delta \beta_n)$
	\begin{equation*}
		\begin{split}
         & \pfrac{\mu_m}{\mu_{n-1}}^{-a} \nu_{n-1}^{-\eps}
            \|x^\phi_m (\xi)-x^{\bar\phi}_m (\xi)\| \\
   		& \le \pfrac{\mu_m}{\mu_{n-1}}^{-a} \nu_{n-1}^{-\eps}
            \dsum_{k=n}^{m-1} \|\cA_{m,k+1}P_{k+1}\| \, \gamma_k\\
   		& \le 3^{q+1} C^{q+1} D \delta^q \|f-\bar f\|''' \|\xi\|
            \mu_{n-1}^{-aq} \nu_{n-1}^{\eps q} \beta_n^q
            \dsum_{k=n}^{m-1} \mu_k^{aq} \nu_k^\eps  \\
   		& \quad + 2^q  3^{q+1}c C^{q} D \delta^q
            \|x^\phi-x^{\bar\phi}\|'' \|\xi\|
            \mu_{n-1}^{-aq} \nu_{n-1}^{\eps q} \beta_n^q
            \dsum_{k=n}^{m-1} \mu_k^{aq} \nu_k^\eps  \\
         & \quad + 2^q 3^q c C^{q+1} D \delta^q \|\phi-\bar \phi\|' \|\xi\|
            \mu_{n-1}^{-aq} \nu_{n-1}^{\eps q} \beta_n^q \dsum_{k=n}^{m-1}
            \mu_k^{aq}
            \nu_k^\eps  \\
         & \le 3^{q+1} C^{q+1} D \delta^q \|f-\bar f\|''' \|\xi\|
            + 2^q 3^{q+1}c C^q D \delta^q \|x^\phi-x^{\bar\phi}\|'' \|\xi\|\\
         & \quad + 2^q 3^q c C^{q+1} D \delta^q \|\phi-\bar \phi\|' \|\xi\|
   	\end{split}
	\end{equation*}
   and this implies
	\begin{equation*}
		\begin{split}
         & \|x^\phi - x^{\bar\phi}\|''\\
         & \le 3^{q+1} C^{q+1} D \delta^q \|f-\bar f\|'''
            + 2^q 3^{q+1}c C^q D \delta^q \|x^\phi-x^{\bar\phi}\|''
            + 2^q 3^q c C^{q+1} D \delta^q \|\phi-\bar \phi\|'.
   	\end{split}
	\end{equation*}
   Thus, for $\delta>0$ such that $2^q 3^{q+1}c C^q D \delta^q < 1/2$ we have
      $$ \|x^\phi - x^{\bar\phi}\|''
         \le 2 \cdot 3^{q+1} C^{q+1} D \delta^q \|f-\bar f\|'''
            + 2^{q+1} 3^q c C^{q+1} D \delta^q \|\phi-\bar \phi\|'.$$
   It follows from the last estimate,~\eqref{norm:phi(s,xi)-bar_phi(s,xi)} and
   $2^q 3^{q+1}c C^q D \delta^q < 1/2$ that
      $$ \|\phi_n (\xi) - \bar\phi_n (\xi)\|
         \le 2 \cdot 3^{q+1} C^{q+1} D \delta^q \|f-\bar f\|''' \|\xi\|
            + 2^{q+1} 3^q c C^{q+1} D \delta^q \|\phi-\bar\phi\|' \|\xi\|$$
   for every $n \in \N$ and every $\xi \in B_n (\delta \beta_n)$. Hence we get
      $$ \|\phi - \bar\phi\|'
         \le 2 \cdot 3^{q+1} C^{q+1} D \delta^q \|f-\bar f\|'''
            + 2^{q+1} 3^q c C^{q+1} D \delta^q \|\phi-\bar\phi\|'$$
   and choosing $\delta>0$ such that $2^{q+1} 3^q c
   C^{q+1} D \delta^q < 1/2$  we obtain
   	$$	\|\phi-\bar\phi\|'
         \le 4 \cdot 3^{q+1} C^{q+1} D \delta^q \|f-\bar f\|'''.$$
   To finish the proof we have to choose $\delta > 0$ such that $4 \cdot
   3^{q+1} C^{q+1} D \delta^q \le 1$.
\end{proof}

\section*{Acknowledgments}
This work was supported by Centro de Matem\'atica da Universidade da Beira
Interior.
\bibliographystyle{elsart-num-sort}

\end{document}